\documentclass[a4paper,11pt]{article}
\usepackage{graphicx} 
\usepackage{braket}
\usepackage{amsmath}
\usepackage{amssymb}
\usepackage{amsthm}
\usepackage{algorithm}
\usepackage{algpseudocode}
\usepackage{newtxtext,newtxmath}
\usepackage{xcolor}
\usepackage{verbatim}
\usepackage{ulem}
\usepackage{mathtools}
\usepackage{makecell}
\usepackage{bibentry}
\usepackage{ bbold }
\usepackage{ dsfont }

\textwidth 15.5cm
\textheight 21.5cm
\topmargin 0cm
\evensidemargin 0in
\oddsidemargin 0in

\usepackage[utf8]{inputenc}
\usepackage{enumerate,framed,mdwlist}
\usepackage{color,colortab}
\usepackage[square,numbers,sort&compress]{natbib}

\newcommand{\R}{\mathbb{R}}

\newcommand{\floor}[1]{\left\lfloor#1\right\rfloor}
\newcommand{\ceil}[1]{\left\lceil#1\right\rceil}

\renewcommand{\epsilon}{\varepsilon}

\newtheorem{nn}{}[section]
\newtheorem{lemma}[nn]{Lemma}
\newtheorem{theorem}[nn]{Theorem}

\newtheorem{definition}[nn]{Definition}
\newtheorem{claim}[nn]{Claim}

\newtheorem{REMARK}[nn]{Remark}
\newenvironment{remark}{\begin{REMARK}}{\end{REMARK}}

\def\ve#1{\mathchoice{\mbox{\boldmath$\displaystyle\bf#1$}}
{\mbox{\boldmath$\textstyle\bf#1$}}
{\mbox{\boldmath$\scriptstyle\bf#1$}}
{\mbox{\boldmath$\scriptscriptstyle\bf#1$}}}

%

\newcommand{\w}{{\ve w}}

\usepackage{amsmath}

\numberwithin{equation}{section}

\allowdisplaybreaks

\usepackage{rotating}
\usepackage{array}
\usepackage{multirow}

\usepackage{authblk}
\newcommand{\email}[1]{{\small{{\texttt{#1}}}}}

\begin{document}

\title{Identifying faulty edges in resistive electrical networks}
\author[1]{Barbara Fiedorowicz}
\author[1]{Amitabh Basu}

\affil[1]{Department of Applied Mathematics and Statistics, Johns Hopkins University\\
\email{\{bfiedor1,basu.amitabh\}@jhu.edu} }

\date{}

\maketitle

\begin{abstract}
    Given a resistive electrical network, we would like to determine whether all the resistances (edges) in the network are working, and if not, identify which edge (or edges) is faulty. To make this determination, we are allowed to measure the effective resistance between certain pairs of nodes (which can be done by measuring the amount of current when one unit of voltage difference is applied at the chosen pair of nodes). The goal is to determine which edge, if any, is not working in the network using the smallest number of measurements. We prove rigorous upper and lower bounds on this optimal number of measurements for different classes of graphs. These bounds are tight for several of these classes showing that our measurement strategies are optimal.
\end{abstract}

{\footnotesize\noindent\textbf{Keywords:} Electrical networks, Spectral graph theory, Information theory, Fault detection, Combinatorial optimization}

\section{Introduction}
Diagnosing faults in electrical circuits and wiring is an important problem with applications in power systems, very large scale integration (VLSI), chip design, circuit board systems, to name a few. The literature is vast; the reader is referred to the survey~\cite{furse2020fault}, a sample of more recent work~\cite{dwivedi2010fault,wang2019electromagnetic} and the references therein. There are a variety of approaches that are adopted in practice and new methods are regularly being invented even to the present. In this paper, we consider a particular formalization of this problem which, to the best of our knowledge, has not been studied rigorously in prior work. Nevertheless, we believe the model we study is natural, of practical relevance, and worth studying from a mathematical perspective. It is related to and inspired by work in detecting faults and structures in continuous conducting media; see the textbooks~\cite{holder2004electrical,bui2007fracture} and~\cite{adler2015electrical} for a more recent survey. Our problem can be seen as a discrete analogue of this problem from the literature on continuous media and related inverse problems. The resulting optimization problem is rich in its scope, spanning topics from combinatorial and spectral graph theory, integer and combinatorial optimization, information theory, and electrical network theory. Let us formally introduce the problem. 

\begin{definition}\label{def:electrical-network} A {\em (resistive) electrical network} is given by a graph $G = (V,E)$ with nonnegative weights $w_e\in \R_+$ for each edge $e\in E$. The graph is allowed to have multiple edges between any pair of vertices, but no self loops are allowed. Each vertex $v \in V$ represents a {\em terminal} in the network and each edge $e = uv$ in $E$ represents a {\em resistor connection} between the terminals $u$ and $v$. The weight $w_e \in \R_+$ represents the conductance value, i.e., the reciprocal of the resistance value of that edge. We use $\w := (w_e)_{e\in E}$ to denote the vector of weights and the notation $(V,E,\w)$ to denote an electrical network.

Given two vertices $r, s \in V$, which are called the {\em source} and the {\em sink} terminals, the {\em effective resistance} $R_{rs}$ of the network between $r,s$ is defined as the reciprocal of the net current that would flow through the network if a unit voltage difference was applied to the source-sink pair.
\end{definition}

The problem we wish to study is to detect whether all the resistance connections in the network are working as expected, or whether one of them has been damaged. To do this, we are allowed to {\em probe} the network by making current value measurements (or equivalently, effective resistance values)  across pairs of terminals (vertices) and use these values to determine if any edge is faulty and if so, which one. 

\begin{definition}\label{def:edge-detection}[Faulty edge detection problem] Let $(V,E,\w)$ be a given electrical network and let $T \subseteq V \times V$ be a fixed subset of (unordered) pairs of vertices. An adversary (nature) selects an edge $e^* \in E$ and alters the weight from $w_{e^*}$ to $0$ (that resistor has been completely removed from the network). One has to select a subset of {\em measurements} $S \subseteq T$ and one observes the effective resistance values for each of the pairs in $S$ (equivalently, one observes the net current flowing through the network when a unit voltage difference is applied to this pair). From these measurements and observed values, one has to figure out which edge was altered by the adversary. The goal is to select the smallest size set $S$ of measurements to achieve this. 

A different version of the problem is obtained when the adversary changes the weight of $e^*$ to $+\infty$ (that edge has been shorted, which is equivalent to contracting the edge).
\end{definition}

One could allow for the possibility that no edge has been altered. Then, one has to detect if any edge has been altered and if so, identify which one. 
We give a short argument below that the number of measurements in this version of the problem is at most one more than the version in Definition~\ref{def:edge-detection} where a single, unknown edge has been altered. Consequently, we will focus on the version in Definition~\ref{def:edge-detection} for the remainder of the paper.

Let $\mathcal{A} \in \mathbb{R^{|T|\times|E|}}$ be the matrix where each row corresponds to a measurement $t \in T$ and each column corresponds to an edge $e \in E$ that has been altered. Each entry in $\mathcal{A}$ contains the effective resistance value for a given pair of source and sink terminals and a given altered edge. Consider the submatrix $\mathcal{A}_S \in \mathbb{R^{|S|\times|E|}}$ of values corresponding to measurements $S \subseteq T$ and all edges $E$. 
If all columns in $\mathcal{A}_S$ are different from one another, then the provided measurements $S$ satisfy the version of our problem given in Definition~\ref{def:edge-detection}. On the other hand, if there are at least two columns that are the same, then the set $S$ of measurements will not distinguish between these two edges. Hence, an equivalent formulation of the problem is to find the smallest set $S\subseteq T$ of measurements such that $\mathcal{A}_S$ has distinct columns.

Suppose now we include a new column $c$ in $\overline{\mathcal{A}} = [\mathcal{A}|c]$ which contains the effective resistance values when no edge in $G$ has been altered. Consider any set $S$ of measurements such that $\mathcal{A}_S$ has distinct columns (i.e., $S$ is a set of measurements that can identify a faulty edge assuming one exists). If all columns of the updated submatrix $\overline{\mathcal{A}}_S$ are different from one another, then the measurements $S$ solve the problem including the possibility that no edge has been altered. Otherwise, there is one column in $\overline{\mathcal{A}}_S$ that is the same as column $c$ restricted to $\overline{\mathcal{A}}_S$ (there can be at most one such column since $\mathcal{A}_S$ has distinct columns). Let $e^*\in E$ be the edge corresponding to this column. If there is no measurement in $T$ such that the column corresponding to $e^*$ has a different entry compared to column $c$ in $\overline{\mathcal{A}}$, then the problem cannot be solved using measurements in $T$. In other words, no measurement is able to distinguish between the possibility that $e^*$ has been altered and the possibility that no edge has been altered. Thus, we may assume there exists a measurement $s^* \in T$ such that the corresponding entry of column of $c$ will be different from that associated with $e^*$. 
As a result, $S^* = S \cup \{s^*\}$ is a set of measurements that will solve the problem with the additional possibility that no edge has been altered. Since $|S^*| = |S| + 1$, at most one additional measurement is required if the possibility of no edge being altered is included in the problem.

We also note that in Definition~\ref{def:edge-detection}, the measurements $S\subseteq T$ are made {\it non-adaptively}, i.e., the result of a measurement is not used to decide what the next measurement is. One must decide on the set $S$ of measurements up front and then the observed measurement values must be used to identify the faulty edge. Our results below apply to this non-adaptive setting. The analysis of adaptive measurement strategies is left for future work.

\subsection{Our contributions} We provide tight upper and lower bounds on the smallest number of measurements for the fault detection problem in various families of electrical networks. In particular, we are able to fully resolve the problem for complete graphs and complete $k$-partite graphs for $k\geq 2$. The formal statements are in Section~\ref{sec:formal_results} and their proofs are provided in Section~\ref{sec:proofs}. The proof techniques draw upon combinatorial and spectral graph theory, electrical network theory, and ideas from information theory. We note briefly that each effective resistance measurement provides a real number; thus, a single measurement potentially provides infinitely many bits of information. For this reason, classical information theoretic lower bound arguments based on bits of information do not immediately apply. While our final results are mathematically precise and rigorous, our search for the best measurement strategy in each of these cases involved modeling the problem as a set covering/integer programming problem and using computational experiments to guide us towards the optimal strategy for measurements. Thus, the problem involves a nice interplay between rigorous mathematical analysis and computer-aided search using discrete optimization techniques. More broadly, we believe that the fault detection problem we set up in Definition~\ref{def:edge-detection} is a new, challenging combinatorial optimization problem that is rich in its mathematical structure and is motivated by a fundamental problem in fault diagnosis in electrical networks, which has diverse applications. We close the paper in Section~\ref{sec:future avenues} with some future directions, including some concrete open questions, that we hope can spawn new and interesting research avenues for the discrete mathematics and optimization community.

\subsection{Related literature} As mentioned before, the formalization of fault detection given in Definition~\ref{def:edge-detection} does not seem to have received a great deal of attention in terms of obtaining provable bounds on the smallest number of measurements. Nevertheless, there are several very related strands of work on fault detection in electrical networks that we review next, which place our work in the broader context of fault diagnosis in electrical systems.

The closest line of work is the investigation in~\cite{kahng1998test}, which is motivated by a fault detection problem in VLSI systems. In this paper, the authors consider the problem of testing whether all edges in a tree network are working properly; thus, the problem is called the {\em tree testing problem}. In the language of Definition~\ref{def:edge-detection}, the graph $G$ is a tree, all edge weights are $1$, and the goal is to simply detect if any edge has been altered or not, without necessarily pinpointing which one if a fault is indeed detected. 

A very closely related line of work originates in the literature on inverse problems, especially in the so-called {\em Calder\'on problem}. The discrete version of the Calder\'on problem is to recover the values of \textit{all} resistors in a resistive network given a possible set of measurements that one can make. While there is no adversary altering the resistances as in Definition~\ref{def:edge-detection}, obviously if one can determine all the resistance values, then one can simply check if any one of them differs from the original value. Thus, the discrete Calder\'on problem requires one to deduce much more information from the measurements, compared to the faulty edge detection problem in Definition~\ref{def:edge-detection} (and therefore may require a much larger number of measurements). A good introduction to the discrete Calder\'on problem is the book~\cite{curtis2000inverse}, which also surveys relevant prior work. ~\cite{kazakov2025inverse} is a recent paper that surveys work in the past couple of decades and provides some new mathematical foundations for the problem. The original Calder\'on problem is defined for a  continuous conducting medium as opposed to a discrete electrical network modeled by a graph. One is allowed to make current and voltage measurements on the boundary of a connected domain $\Omega \subseteq \R^d$ and from this one has to infer the electrical properties in the interior of $\Omega$. The problem forms the mathematical foundation for {\em Electrical Impedance Tomography (EIT)}~\cite{holder2004electrical,adler2015electrical,borcea2002electrical}, which has numerous applications ranging from computational medicine, geophysics, to testing for structural defects in solids. Much of the literature uses the terms {\em EIT} and {\em Calder\'on problem} interchangeably. The tools and techniques are quite different from the discrete setting (and especially the techniques we use in this paper), drawing upon methods from PDEs and classical inverse problems. A continuous version of the problem we study in this paper (Definition~\ref{def:edge-detection}) has been studied, arising out of the work on the continuous Calder\'on problem. Here, one considers a {\em fracture} or a {\em crack} in the conducting medium, modeled by a curve that is perfectly insulating. The goal is to, again, use boundary measurements to determine the location/shape of the curve. The problem was introduced in a seminal paper by Friedman and Vogelius~\cite{friedman1989determining}, with several follow up works; see~\cite{alessandrini1993stable,alessandrini1995stability,alessandrini1996unique,alessandrini1997determining,santosa1991computational} for a sample and the textbook~\cite{bui2007fracture} for more details.

In~\cite{shi1999diagnosis,shi2001structural}, the authors consider the problem of detecting faults in wiring networks. They build on prior work which can be traced back to a seminal paper by Kautz~\cite{kautz1974testing}. Here, the vertices of a graph model different logical units (often called {\em nets}) in a circuit board and each edge denotes a possible site of a ``fault''. One is allowed to probe the logical units and use the responses to detect these faults, if any. The mathematical formalization is the following: given a (simple) graph $G = (V,E)$, one can query a subset $U \subseteq V$ and the response is a subset $Q(U)$ which is the set of all vertices that can be reached by a path from a vertex in $U$. The goal is to find the smallest set of queries to identify all the connected components of $G$ (the vertices in the connected components of size two or more correspond to the logical units that are connected by faulty edges). A group testing~\cite{aldridge2019group} approach to this problem was considered in~\cite{chen1989detecting} and is discussed
in the papers~\cite{shi1999diagnosis,shi2001structural} cited above, along with other relevant work on this problem.

\section{Statement of results}\label{sec:formal_results}

We consider the fault detection problem in Definition~\ref{def:edge-detection} where all the unaltered weights are $1$, i.e., the network is made of unit resistance (conductance) edges; we use $\mathbf{1}$ to denote the vector of all ones. Moreover, we allow all possible measurements, i.e., $T = V \times V$ (see Definition~\ref{def:edge-detection}). We present upper and lower bounds when the electrical network's underlying graph is a complete graph or a complete $k$-partite graph.

\paragraph{Complete graphs} The following theorem gives the number of measurements \newline needed for complete graphs (i.e., graphs that have an edge between every pair of vertices).

\begin{theorem}\label{thm:complete}
    The smallest number of measurements needed to solve the faulty edge detection problem in a complete graph on $n \geq 6$ vertices is exactly $\left\lceil \frac{2n}{3}\right\rceil$.
\end{theorem}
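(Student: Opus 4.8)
The plan is to compute, for a complete graph $K_n$ with unit conductances, the effective resistance between any pair $(r,s)$ both before and after an arbitrary edge $e^*$ is deleted, and to understand exactly when two different edge-deletions produce identical effective-resistance signatures across a chosen measurement set. The key structural fact is that $K_n$ is vertex-transitive and highly symmetric, so the effective resistance between $r,s$ after deleting $e^*$ should depend only on the ``type'' of the configuration, namely how the endpoints of $e^*$ relate to $\{r,s\}$ (e.g., whether $e^*=rs$ itself, whether $e^*$ shares exactly one endpoint with $\{r,s\}$, or whether $e^*$ is disjoint from $\{r,s\}$). Using the well-known formula relating effective resistance to the Laplacian pseudoinverse, or directly using the Sherman--Morrison rank-one update for deleting a single edge, I would derive closed-form expressions for these few cases. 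I expect only a small number of distinct values to arise, governed by these incidence types, which is what makes the counting tractable.

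First I would set up the $|T| \times |E|$ matrix $\mathcal{A}$ from the excerpt (here $T = V\times V$ and $E = \binom{[n]}{2}$) and reduce the problem, as the excerpt already does, to choosing the smallest $S \subseteq T$ so that the columns of $\mathcal{A}_S$ are pairwise distinct. The next step is to translate ``columns $e$ and $f$ are distinguished by measurement $(r,s)$'' into a purely combinatorial condition on how $e$, $f$, and $\{r,s\}$ overlap, using the closed-form effective-resistance values from the first paragraph. This should reveal that a measurement $(r,s)$ can only fail to separate edges that are ``symmetric'' with respect to $\{r,s\}$, so that the separating power of a measurement set $S$ is captured by a clean covering/identification condition on the vertices and pairs that $S$ touches. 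I would phrase this as: the set $S$ must, for every pair of edges, contain at least one measurement whose overlap pattern with the two edges differs.

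For the upper bound I would exhibit an explicit family of $\lceil 2n/3\rceil$ measurements and verify it separates all pairs of edges; the natural construction is to partition (most of) the $n$ vertices into triples and take two measurements inside each triple, spending roughly $2/3$ of a measurement per vertex, then patch the leftover one or two vertices. For the matching lower bound I would argue that any separating set must ``cover'' the vertices densely enough: if too few measurements are used, then the union of vertices they touch, together with the incidence data they record, is too coarse to distinguish some pair of edges, forcing $|S| \ge \lceil 2n/3\rceil$. This lower bound is where a careful counting or an extremal/covering-design argument is needed, and it is the step I expect to be the main obstacle, since real-valued measurements carry no a priori bit-budget (as the excerpt emphasizes); the lower bound must instead be combinatorial, ruling out the existence of small separating sets by analyzing which edge-pairs remain confused. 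The hypothesis $n \ge 6$ suggests the extremal argument has a few small-case exceptions that are handled separately, and I would isolate those at the end.
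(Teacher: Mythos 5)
Your setup and your upper bound track the paper closely. The incidence-type computation via the Sherman--Morrison update is exactly the paper's approach (its Lemma on $K_n$ shows the change in $R_{rs}$ depends only on whether the altered edge equals $(r,s)$, shares exactly one endpoint with it, or is disjoint from it), and your construction --- partition most vertices into triples, spend two measurements per triple, patch the one or two leftovers --- is precisely the paper's butterfly-wing strategy achieving $\left\lceil \frac{2n}{3}\right\rceil$. One consequence of the closed forms that your sketch should make explicit, since it drives everything downstream: edges incident to $r$ only and edges incident to $s$ only receive the \emph{same} value, so a single measurement induces only three equivalence classes of edges, and the correctness of the construction requires a genuine case analysis (the paper needs four cases over how the two candidate edges sit relative to the triples and leftover vertices).

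The genuine gap is the lower bound, and it is not merely an unexecuted step --- the heuristic you propose would fail. You argue that a small $S$ cannot ``cover the vertices densely enough,'' but vertex coverage alone can never force more than roughly $n/2$ measurements: a perfect matching of measurements touches every vertex using only $n/2$ pairs, far below $2n/3$, and yet fails --- by the symmetry noted above, if $(r,s)\in S$ and neither $r$ nor $s$ lies in any other measurement, then $(r,d)$ and $(s,d)$ are confounded by every measurement for any third vertex $d$. The paper's key idea, absent from your sketch, is to regard $\mathcal{M}=S$ as a \emph{measurement graph} $G'=(V,\mathcal{M})$ and constrain its connected components rather than its vertex cover: $G'$ has no component of size two (by the matching example just given), at most one isolated vertex, and when an isolated vertex $v$ exists, every size-three component $\{a,b,c\}$ must be a triangle, since otherwise $(v,a)$ and $(b,c)$ are indistinguishable. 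Each component of size $s\geq 3$ then contributes at least $s-1\geq \frac{2s}{3}$ edges, so $|\mathcal{M}|\geq \frac{2n}{3}$ in the no-isolated-vertex case, and the isolated-vertex case gives $\left\lceil \frac{3n}{4}-\frac{3}{4}\right\rceil \geq \left\lceil \frac{2n}{3}\right\rceil$, which is also where the hypothesis $n\geq 6$ is actually needed (in this ceiling comparison for small $n$), not in small-case exceptions to the construction as you surmised. Without this component-level argument your covering approach stalls near $n/2$ and the matching lower bound of $\left\lceil \frac{2n}{3}\right\rceil$ is out of reach.
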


\paragraph{Complete $k$-partite graphs} A complete $k$-partite graph is one where the set of vertices can be partitioned into $k$ independent sets and there is an edge between every pair of vertices in different partitions. We first state exact bounds for the bipartite ($k=2$) and tripartite ($k=3$) cases, and then state a generalization for arbitrary $k \geq 2$.

\begin{theorem}\label{thm:bipartite} Let $(V,E,\mathbf{1})$ be an electrical network such that the underlying graph $G=(V,E)$ is a complete bipartite graph with $|V|=n$ and partitions $p_\beta, p_\gamma$ with $|p_\beta| \leq |p_\gamma|$. 

When $|p_\beta| < |p_\gamma|$, the smallest number of measurements needed to solve the faulty edge detection problem is $\left\lfloor{\frac{2}{3}|p_\gamma| + \frac{1}{3}|p_\beta|} \right\rfloor - 1$ when $|p_\gamma| - |p_\beta| \equiv 0\mod{3}$, and equal to $\left\lfloor \frac{2}{3}|p_\gamma| + \frac{1}{3}|p_\beta| \right\rfloor$ when $|p_\gamma| - |p_\beta| \equiv (1 \textrm{ or } 2)\mod{3}$.

When $|p_\beta| = |p_\gamma|$, the smallest number of measurements needed to solve the faulty edge detection problem is $\left\lfloor\frac{2n}{3}\right\rfloor - 1$ when $|p_\beta| \equiv 0 \textrm{ or } 1\mod{3}$, and equal to $\left\lfloor\frac{2n}{3}\right\rfloor$ when $|p_\beta| \equiv 2\mod{3}$. 
\end{theorem}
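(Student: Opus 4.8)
The plan is to convert the analytic question into a combinatorial separation problem through the effective–resistance formula, and then to solve the resulting small integer program; the two regimes $|p_\beta|<|p_\gamma|$ and $|p_\beta|=|p_\gamma|$ will require genuinely different combinatorial reductions. Write $m:=|p_\beta|$ and $\ell:=|p_\gamma|$. The first step is to record how a single measurement responds to a fault. Removing a unit–conductance edge $ab$ is a rank–one downdate of the Laplacian, and the Sherman–Morrison identity for the pseudoinverse gives
\[
R^{(-ab)}_{rs}=R_{rs}+\frac{\bigl(R_{rb}+R_{sa}-R_{ra}-R_{sb}\bigr)^2}{4\,(1-R_{ab})}.
\]
Because every removable edge of a complete bipartite graph joins $p_\beta$ to $p_\gamma$, the denominator $1-R_{ab}$ is the same constant for all columns, so measurement $(r,s)$ distinguishes two faults iff the squared numerator differs. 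I would then compute the three effective resistances of $K_{m,\ell}$ from its Laplacian spectrum — $2/\ell$ inside $p_\beta$, $2/m$ inside $p_\gamma$, and $(m+\ell-1)/(m\ell)$ across the parts — and substitute. This produces a clean dictionary: a $p_\beta$–$p_\gamma$ measurement $(r,s)$ yields a value governed by the pair $(\mathbf 1[a=r],\mathbf 1[b=s])$, whose four possibilities are distinct precisely when $m\ne\ell$ (two of them collapse when $m=\ell$, leaving only the sum $\mathbf 1[a=r]+\mathbf 1[b=s]$ visible); a $p_\beta$–$p_\beta$ measurement reveals only $\mathbf 1[a\in\{r,r'\}]$, and a $p_\gamma$–$p_\gamma$ measurement only $\mathbf 1[b\in\{s,s'\}]$.

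For the unequal case $m<\ell$ this dictionary reduces the problem exactly to separating the $m$ ``rows'' and the $\ell$ ``columns''. Indeed, two faults sharing a column are distinguished only by a test separating their rows, so every row (and symmetrically every column) must be separated; conversely, since the four $p_\beta$–$p_\gamma$ values are distinct, any test that row-separates a \emph{diagonal} pair of faults (one differing in both coordinates) already distinguishes them, so row- plus column-separation is also sufficient. I would package this as: with $x$ mixed ($p_\beta$–$p_\gamma$), $y$ same-$p_\beta$, and $z$ same-$p_\gamma$ measurements, the available row-tests are $x$ singletons and $y$ pairs, and the column-tests are $x$ singletons and $z$ pairs. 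The engine is a separation lemma: $s$ singleton-tests and $p$ pair-tests separate at most $1+s+\lfloor 3p/2\rfloor$ elements, with equality for disjoint two-edge paths (``cherries''). I would prove the upper bound by viewing the pair-tests as a graph on the separated elements, observing that a separating family forbids twins (at most one isolated unmarked vertex, and no unmarked isolated edge), and charging: a singleton contributes ratio $1$, a cherry ratio $3/2$, and any larger component at most $3/2$, whence the bound.

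The separation lemma turns the optimization into: minimize $x+y+z$ subject to $x+\lfloor 3y/2\rfloor\ge m-1$ and $x+\lfloor 3z/2\rfloor\ge \ell-1$. Since a mixed measurement advances both constraints at unit rate while a same-part measurement advances one at rate $3/2$, the optimum puts $y=0$, saturates the rows with $x=m-1$ mixed measurements, and covers the remaining columns with $z=\lceil 2(\ell-m)/3\rceil$ cherries, giving $m-1+\lceil 2(\ell-m)/3\rceil$. Writing $\ell-m=3t+\delta$ I would check this equals $\lfloor \tfrac23\ell+\tfrac13 m\rfloor-1$ when $\delta=0$ and $\lfloor \tfrac23\ell+\tfrac13 m\rfloor$ when $\delta\in\{1,2\}$, matching the statement; the matching construction (the $m-1$ mixed measurements double as both the row- and column-singletons, the cherries lie in $p_\gamma$) is exactly the lemma's extremal family, and necessity follows from the lemma's upper bound applied to both row- and column-separation.

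The balanced case $m=\ell$ is the hard part, and is where I expect the real work to lie, because there the mixed measurement reports only the sum $\mathbf 1[a=r]+\mathbf 1[b=s]$; equivalently every measurement $f$ now reports exactly $|e\cap f|$ for the faulty edge $e$. Row- and column-separation remain necessary but are no longer sufficient: a mixed test can row-separate a diagonal pair and yet return equal counts $1=1$ when the column indicator compensates, so one must additionally kill these cross-collisions, turning the problem into a coin-weighing / $B_1$-type condition on $V=p_\beta\cup p_\gamma$ (the sums $\chi(a)+\chi(b)$ of incidence vectors in the measurement multigraph, taken over bipartite edges, must all be distinct). I would handle the upper bound with a cherry decomposition of $V$ engineered so that no diagonal collision survives, and the lower bound with a degree-counting argument in the spirit of the separation lemma, now over all $2m$ vertices, which explains the $\lfloor 2n/3\rfloor$ scaling. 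The delicate points — and the main obstacles throughout — are (i) proving the exact additive constant, i.e.\ the ``$-1$'', together with the trichotomy modulo $3$, which forces a careful integrality analysis of the program rather than its LP relaxation, and (ii) in the balanced case, showing that the extra cross-collision constraints cost exactly the gap between $\tfrac{m+2\ell}{3}$ and $\tfrac23 n$, so that neither the construction nor the lower bound is off by one in any residue class.
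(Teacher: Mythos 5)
Your route for the unequal case $|p_\beta|<|p_\gamma|$ is genuinely different in organization from the paper's, and where it works it is cleaner. The paper proves the upper bound by exhibiting the strategy of Definition~\ref{def:bip_different_sizes_strategy} (a near-perfect matching between the parts plus butterfly wings inside $p_\gamma$) and checking all fault pairs against the tabulated $\Delta$-values, and proves the lower bound by component-counting in the measurement graph followed by an ad hoc sharpening split over the residues of $|p_\gamma|-|p_\beta|$ modulo $3$. You instead extract an exact characterization --- a measurement set works iff its induced singleton tests (from mixed measurements) and pair tests (from same-part measurements) separate the $m$ rows and, independently, the $\ell$ columns --- and then run everything through one reusable separation lemma ($s$ singletons plus $p$ pairs separate at most $1+s+\lfloor 3p/2\rfloor$ elements, with cherries extremal) feeding a two-constraint integer program. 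Your cherries are exactly the paper's butterfly wings and your $3/2$ charging is its ``$s-1\ge 2s/3$ edges per component'' count, but packaging them as a lemma applied twice makes the optimum $m-1+\lceil 2(\ell-m)/3\rceil$ fall out with all three residue classes, reproducing the paper's sharpened constants without its case surgery. I verified your IP optimum against the theorem's values in all residues; they match.

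There are, however, two genuine gaps. First, the balanced case $m=\ell$ is a plan, not a proof, and it is half the theorem. You correctly identify the collapse of columns \textbf{II} and \textbf{III} (every measurement now reports only $|e\cap f|$, i.e.\ the condition that the degree-profile sums $d(a)+d(b)$ be distinct), and you correctly predict where the difficulty lies --- but you then defer exactly the content the paper supplies. The paper's lower bound rests on enumerating the specific collision configurations: two disjoint cross-part measurement edges $(r,s)$ and $(r',s')$ force $d(r)+d(s')=d(r')+d(s)$, so the measurement graph admits at most one size-two component \emph{in total}; and two isolated vertices together with a single measurement edge also collide, so these cannot coexist. This yields $\min\left\{\frac{2(n-2)}{3},\,1+\frac{2(n-3)}{3}\right\}$ and hence the $\lfloor 2n/3\rfloor-1$ versus $\lfloor 2n/3\rfloor$ trichotomy. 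The matching construction is the zig-zagging/hairpin scheme of Definition~\ref{def:bip_same_size_strategy}, whose alternating orientation of cherries between the two parts is precisely what kills the cross-collisions; your proposal offers only ``engineered so that no diagonal collision survives'' and explicitly flags the additive constants as unresolved. Second, your dictionary overclaims: the four mixed-measurement values are \emph{not} always distinct when $m\ne\ell$. The numerators are $2(m+\ell-1)/(m\ell)$, $2(m-1)/(m\ell)$, $2(\ell-1)/(m\ell)$, and $-2/(m\ell)$, and since only squares matter, the cases $(a=r,\,b\ne s)$ and $(a\ne r,\,b\ne s)$ coincide whenever $(m-1)^2=1$, i.e.\ for $m=2$. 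In $K_{2,\ell}$ a mixed test can therefore row-separate a diagonal pair of faults without distinguishing them, so your equivalence ``valid iff row- and column-separating'' --- the load-bearing step of your reduction --- fails there and needs a separate repair (the paper's own Case~1 argument, invoking columns \textbf{II} versus \textbf{IV}, silently has the same blind spot, but you assert the distinctness as the foundation of the whole reduction, so for your proof it is a concrete gap).
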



\begin{theorem}\label{thm:tripartite} Let $(V,E,\mathbf{1})$ be an electrical network such that the underlying graph $G=(V,E)$ is a tripartite graph with $|V|=n$ and partitions $p_\alpha, p_\beta, p_\gamma$ with $|p_\alpha| \leq |p_\beta| \leq |p_\gamma|$. 
The smallest number of measurements needed to solve the faulty edge detection problem is given in Table~\ref{tab:bounds_tripart}.

\begin{table}[htbp]
    \centering
    \begin{tabular}{|c|c|c|}
        \hline
        \textbf{Partition Sizes} &  \textbf{Upper Bound} & \textbf{Lower Bound} \\
        \hline
        \hline
        $|p_\alpha| < |p_\beta| < |p_\gamma|$ & $\left\lceil{\frac{n-3}{2}}\right\rceil$ & $\left\lceil\frac{n-3}{2} \right\rceil$\\
        \hline
        $|p_\alpha| = |p_\beta| < |p_\gamma|$ & $\max\left\{\makecell{\ceil{\frac{2}{3}n - \frac{2}{3}p_\alpha - \frac{4}{3}},\\ \ceil{\frac{2}{3}n - \frac{1}{3}p_\gamma - \frac{5}{3}}}\right\}$ & $\min\left\{\makecell{\ceil{\frac{2}{3}n - \frac{2}{3}p_\alpha - \frac{4}{3}},\\ \ceil{\frac{2}{3}n - \frac{1}{3}p_\gamma - \frac{5}{3}}}\right\}$\\
        \hline 
        $|p_\alpha| < |p_\beta| = |p_\gamma|$ & $\left\lceil{\frac{2}{3}n - \frac{1}{3}p_\alpha - \frac{5}{3}}\right\rceil$ & $\left\lceil{\frac{2}{3}n - \frac{1}{3}p_\alpha - \frac{5}{3}} \right\rceil$\\
        \hline
        $|p_\alpha| = |p_\beta| = |p_\gamma|$ & $\left\lceil{\frac{2}{3}n-2}\right\rceil$ & $\left\lceil{\frac{2}{3}n-2}\right\rceil$\\
        \hline
    \end{tabular}
    \caption{Number of measurements required for tripartite graphs.}
    \label{tab:bounds_tripart}
\end{table}    
\end{theorem}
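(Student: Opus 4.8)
The plan is to reduce the analytic problem to a purely combinatorial separation problem on a ``measurement graph'' and then argue matching upper and lower bounds regime by regime. First I would record how a single effective-resistance measurement responds to the removal of one edge. Writing $L$ for the Laplacian of the unit-weight network and $L^{+}$ for its pseudoinverse, deleting edge $e = xy$ is the rank-one downdate $L \mapsto L - (e_x - e_y)(e_x - e_y)^{\top}$, so a Sherman--Morrison argument on the cut space gives
\[
R'_{rs}(e) \;=\; R_{rs} \;+\; \frac{\phi_{rs}(e)^2}{4\,(1 - R_{e})}, \qquad \phi_{rs}(e) := R_{xs} + R_{yr} - R_{xr} - R_{ys},
\]
where $R_{e} = R_{xy} < 1$ is the effective resistance across the deleted edge (finite and $<1$ since no edge of $K_{a,b,c}$ is a bridge) and $\phi_{rs}(e)/2$ is the current through $e$ driven by a unit $r\to s$ flow. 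Since $R_{rs}$ and $R_{e}$ do not depend on which edge is removed once the measurement and the edge type are fixed, the column of $\mathcal{A}_S$ belonging to $e$ is determined by the vector $(\phi_{rs}(e)^2)_{(r,s)\in S}$ together with $R_{e}$. Thus ``$\mathcal{A}_S$ has distinct columns'' becomes the combinatorial statement that the signatures $(\phi_{rs}(e)^2)_{(r,s)\in S}$ separate all edges.

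Next I would specialize to $K_{a,b,c}$, where by symmetry every $R_{ij}$ takes one of a few values determined by the parts of $i,j$; these follow from the Laplacian spectrum $\{0,\,n^{(2)},\,(n-a)^{(a-1)},\,(n-b)^{(b-1)},\,(n-c)^{(c-1)}\}$. Substituting into $\phi_{rs}(e)$ shows that the auxiliary potential $g_e(w) := R_{wy} - R_{wx}$ of an edge $e = xy$ depends only on the part of $w$ and on whether $w \in \{x,y\}$: it equals a fixed ``baseline'' value for each part, shifted by $\pm R_{\mathrm{same}}$ exactly when $w$ is an endpoint of $e$. Hence $\phi_{rs}(e) = g_e(r) - g_e(s)$ is nonzero only when the pair $\{r,s\}$ ``touches'' $e$ or sits asymmetrically around it, and a measurement distinguishes two edges precisely when it hits their endpoints differently. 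This is exactly the structure already exploited for complete and complete bipartite graphs, and it cleanly separates two regimes: when $a,b,c$ are pairwise distinct the three edge types have distinct $R_e$ and therefore separate automatically, so one only has to resolve endpoints inside each part; when two or three sizes coincide the corresponding resistances coincide, the edge types become coupled, and the problem behaves like the complete-graph case.

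For the upper bounds I would exhibit, in each regime, an explicit family $S$ realizing the stated count and verify separation via the formula for $\phi_{rs}(e)$. In the pairwise-distinct regime the three types are already separated by their distinct $R_e$, so it suffices to resolve the endpoints within each part; a near-perfect matching touching all but one vertex of every part uses about $(n-3)/2$ measurements and does this. In the coupled (equal-size) regimes the building block is instead a disjoint union of two-edge paths $P_3$: a path $x - z - y$ gives its three vertices distinct incidence patterns, and $t$ disjoint copies cover $3t$ vertices with $2t$ measurements, the source of the $\tfrac23 n$ leading term. Residual vertices and the partition constraints (at most one untouched vertex per part) are absorbed by adjusting a constant number of gadgets, producing the floors and ceilings and the $-\tfrac53,\,-\tfrac43,\,-2$ corrections in the table. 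The main work here is to check that the chosen gadgets avoid the accidental coincidences $\phi_{rs}(e)^2 = \phi_{rs}(e')^2$ (i.e.\ $\phi_{rs}(e) = -\phi_{rs}(e')$) that the squaring can create for the specific partition-types of measurements used.

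For the lower bounds I would combine two arguments; write $m := |S|$. The first is a symmetry argument: any automorphism of $K_{a,b,c}$ that fixes every measurement pair setwise preserves all signatures, so for $S$ to be valid it must fix every edge and hence be trivial; since swapping two vertices that lie in a common part and touch no measurement (other than their mutual pair) is such an automorphism, at least $|p|-1$ vertices of each part $p$ must be touched, and summing over the three parts gives at least $n-3$ touched endpoints, hence at least $\ceil{(n-3)/2}$ measurements, which is tight in the pairwise-distinct case. The second, needed in the coupled equal-size cases, is a degree-counting argument paralleling the complete-graph bound: distinct signatures force distinct vertex-incidence vectors in the measurement graph, which, after bounding the number of degree-one vertices and excluding the forbidden isolated-edge configurations, yields $3m \ge 2n - O(1)$ and hence the $\tfrac23 n$-type lower bounds. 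I expect the genuine obstacle to be the two-equal-parts case $|p_\alpha| = |p_\beta| < |p_\gamma|$, where the counting splits according to which resistance coincidences are active; this is precisely the regime where the theorem only claims a $\max$/$\min$ pair of bounds, so the plan is to prove a covering construction matching the $\max$ and a counting lower bound matching the $\min$, and to pin down the remaining unit gap by a finer analysis of how touches in the two equal parts can be shared.
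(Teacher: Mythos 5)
Your skeleton is essentially the paper's: a Sherman--Morrison perturbation formula for the deleted edge (the paper derives the same quantity through the reduced Laplacian $L(v)^{-1}$ and its explicit block inverse for complete $k$-partite graphs), reduction to separating edge signatures, upper bounds via matchings in the pairwise-distinct regime and disjoint $P_3$ gadgets (the paper's ``butterfly wings'') with one designated untouched vertex per part in the coupled regimes, and lower bounds via counting in the measurement graph. Your automorphism argument for ``at most one untouched vertex per part'' is a genuinely nicer justification than the paper's, which instead checks column-by-column in Tables~\ref{table:k_part-diff-part} and~\ref{table:k_part-same-part} that two isolated same-part vertices are interchangeable; the symmetry argument buys robustness and would generalize beyond unit weights. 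However, there are two genuine gaps. First, the claim that in the pairwise-distinct regime the three edge types ``separate automatically'' because their $R_e$ values differ is false as stated: the measurement response is $\phi_{rs}(e)^2/\bigl(4(1-R_e)\bigr)$, and distinct denominators prove nothing when the numerators vanish or compensate. Indeed, for a measurement inside one part, all edges not incident to its endpoints produce \emph{zero} response regardless of type (columns \textbf{XI} and \textbf{XII} of Table~\ref{table:k_part-same-part}), and even for cross-part measurements, equality of different columns genuinely occurs under size coincidences (e.g.\ columns \textbf{II} and \textbf{III} coincide exactly when $|p_r|=|p_s|$, which is why the paper's bipartite correctness proof explicitly invokes $|p_r|\neq|p_s|$). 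The actual mechanism is not $R_e$-distinctness but explicit comparison of the closed-form values, which is what the paper's case analysis does; your promised verification of ``accidental coincidences'' covers only the sign ambiguity $\phi=-\phi'$ within a type, not cross-type value collisions.

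Second, and more seriously for this particular theorem, your coupled-case lower bound is only sketched to $3m \ge 2n - O(1)$, while the entire content of Table~\ref{tab:bounds_tripart} is the exact additive constants ($-2$, $-5/3$, $-4/3$ after taking ceilings). Extracting these requires the structural facts the paper proves and you do not: each pair of parts supports at most \emph{one} size-two component of the measurement graph; two parts cannot simultaneously each contain an isolated vertex and the endpoints of a size-two component between them; and every component of size $s\ge 3$ contributes at least $2s/3$ measurements. The paper then runs a case analysis over the number of isolated vertices (zero through three in the equal-parts case) to land exactly on $2|p|-2$, and adapts the same machinery to the two-equal-parts row, where the theorem deliberately states only a $\max$/$\min$ pair. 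Your proposal correctly reads that row and correctly identifies it as the hard regime, but ``pin down the remaining unit gap by a finer analysis'' defers precisely the step where the proof lives. Also note a slip in your counting argument: it is edges, not vertices, that carry signatures, so ``distinct signatures force distinct vertex-incidence vectors'' is not the right statement; the usable lemmas concern isolated vertices and small components, as above. As written, the proposal is a sound plan matching the paper's architecture, but not yet a proof of the stated exact bounds.
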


We now state our result for $k$-partite graphs for arbitrary $k\geq 2$.

\begin{definition}\label{def:list} 
An {\em ordered list} of numbers is a finite sequence $a_1, \ldots, a_k$ of real numbers such that the elements $a_1 \leq \ldots \leq a_k$, with $k\geq 1$. Given an ordered list $L$ of $k$ numbers and any subset $S \subseteq \{1, \ldots, k\}$, we define $L\setminus S$ to be the new ordered list where elements $a_i$, $i\in S$, are removed from $L$. Also, given any ordered list $L$, we define $$val(L) := \sum_{i \in [k]: i \mod{3} \equiv 0} (2|p_i| -2).$$ 
\end{definition}

\begin{theorem}\label{thm:k-partite}
Let $k\geq 2$ be a natural number. Consider an electrical network $(V,E,\mathbf{1})$ such that the underlying graph $G=(V,E)$ is a complete $k$-partite graph with $|V|=n$, with partitions labeled $p_1, \ldots, p_k$, such that the partition sizes satisfy $2 \leq |p_1| \leq |p_2| \leq \ldots \leq |p_k|$. Then one needs at least $\left\lceil \frac{n - k}{2} \right\rceil$ measurements to solve the faulty edge detection problem. Let $L$ be the ordered list of these partition sizes. An upper bound on the number of measurements is given by

$$\begin{array}{rl}
val(L) & \textrm{if} \quad k \equiv 0\mod{3},\\
\min_{i\in\{1,\ldots, k\}}\left\{\left\lceil\frac{2(|p_i|)}{3}\right\rceil + val(L\setminus\{i\})\right\} & \textrm{if} \quad k \equiv 1\mod{3},\\
\min_{i, j\in \{1,\ldots, k\},\; i\neq j}\left\{\left\lceil\frac{2(|p_i| + |p_j| - 1)}{3}\right\rceil + val(L\setminus\{i,j\})\right\} & \textrm{if} \quad k \equiv 2\mod{3}.
\end{array}
$$
\end{theorem}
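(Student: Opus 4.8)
The plan is to recast the problem, via the column-distinctness criterion already established, as a combinatorial separation problem, and to exploit the symmetry of complete multipartite graphs together with the rank-one update formula for effective resistance. Recall that if $L$ is the Laplacian of the (unaltered) network and $L^{+}$ its pseudoinverse, then for a unit-conductance edge $e=uv$ the effective resistance after deleting $e$ is
\[
\tilde R_{rs} \;=\; R_{rs} \;+\; \frac{\big((\e_r-\e_s)^{\top}L^{+}(\e_u-\e_v)\big)^{2}}{\,1-(\e_u-\e_v)^{\top}L^{+}(\e_u-\e_v)\,},
\]
where $\e_v$ denotes the standard basis vector at $v$ and $(\e_u-\e_v)^{\top}L^{+}(\e_u-\e_v)=R_{uv}$. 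Thus the entry $\mathcal{A}_{(r,s),e}$ depends only on the isomorphism type of the triple $\big(G,\{r,s\},e\big)$. In particular, if $\sigma$ is an automorphism of $G$ that fixes every measurement pair of $S$ setwise, then measurement $(r,s)$ returns the same value for $e$ and for $\sigma(e)$, so no measurement in $S$ separates $e$ from $\sigma(e)$. This observation drives both bounds: for the lower bound it manufactures indistinguishable edge pairs, and for the upper bound it pins down exactly which coincidences a measurement set must break.

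For the lower bound I would show that any measurement set $S$ solving the problem leaves at most one vertex of each partition untouched (incident to no pair of $S$). Indeed, if a partition $p_i$ contained two untouched vertices $v,v'$, the transposition $\tau=(v\,v')$ is an automorphism of the complete $k$-partite graph fixing every pair of $S$ (neither $v$ nor $v'$ occurs in $S$); choosing any $x\notin p_i$ (which exists since $k\ge 2$), $\tau$ sends the edge $vx$ to the distinct edge $v'x$, so by the observation above the columns of $\mathcal{A}_S$ for $vx$ and $v'x$ coincide, contradicting distinctness. Hence at least $n-k$ vertices are incident to some measurement; since each of the $m$ measurements is incident to at most two vertices, $2m\ge n-k$, giving $m\ge\lceil (n-k)/2\rceil$.

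For the upper bound I would give an explicit construction. Sort the partitions as $|p_1|\le\cdots\le|p_k|$ and group them into consecutive triples $\{p_1,p_2,p_3\},\{p_4,p_5,p_6\},\ldots$; when $k\not\equiv 0\pmod 3$ the one or two leftover largest-index partitions are treated by a dedicated sub-construction. Within each complete triple I would place measurements so that (mirroring the all-equal tripartite base case of Theorem~\ref{thm:tripartite}) exactly one vertex of each partition remains free—matching the extremal configuration of the lower bound—with the count governed by the largest partition $p_{3j}$ of the triple and contributing $2|p_{3j}|-2$; summing over triples yields $val(L)$. The residual block supplies the extra term: a single leftover partition $p_i$ is handled with $\lceil 2|p_i|/3\rceil$ measurements and a leftover pair $p_i,p_j$ with $\lceil 2(|p_i|+|p_j|-1)/3\rceil$, and one minimizes over which partition(s) to leave in the residual block, producing the three cases indexed by $k\bmod 3$.

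The main obstacle is verifying that the constructed $S$ actually yields pairwise-distinct columns. Here I would use the closed forms of $R_{uv}$ and of the transfer term $(\e_r-\e_s)^{\top}L^{+}(\e_u-\e_v)$ in complete multipartite graphs—which depend only on the partition memberships of $r,s,u,v$ and on which of these four vertices coincide—to exhibit, for each pair of edges, a measurement separating them. Intra-triple separations follow the tripartite analysis, but the genuinely delicate part is separating edges running between different triples (the groups are not independent subnetworks, since every pair of partitions is joined) and confirming that the free vertices never leave an unbroken transposition symmetry across the whole graph. This case analysis over edge types, together with the arithmetic showing the construction's size equals the stated formula, is the technical heart of the proof.
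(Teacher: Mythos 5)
Your lower bound is complete and correct, and it reaches the paper's bound by a genuinely slicker route. The paper establishes the key fact---that the measurement graph can have at most one isolated vertex per partition---by checking against the explicit effective-resistance tables (Tables~\ref{table:k_part-diff-part} and~\ref{table:k_part-same-part}) that edges $(a,b)$ and $(a',b)$ draw their values from the same column; you get the same conclusion in one line from the transposition automorphism $\tau=(v\,v')$ fixing all measurement pairs, which needs no computation at all and works verbatim for any vertex-transitive-within-parts structure. From there both arguments count $n-k$ touched vertices and apply handshaking identically.

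The upper bound, however, has a genuine gap, in two respects. First, the construction is underspecified in a way that matters: ``place measurements so that exactly one vertex of each partition remains free'' with the right count does \emph{not} determine a valid scheme, and not every scheme meeting that description works. For instance, between two equal-size partitions a plain perfect matching leaves the right number of free vertices and attains the count, yet fails: when $|p_r|=|p_s|$ the deletion values in columns \textbf{II} and \textbf{III} of Table~\ref{table:k_part-diff-part} coincide, so the edges $(r,s')$ and $(r',s)$ across two matching measurements are indistinguishable. This is precisely why the paper's strategy prescribes the specific tripartite, zig-zagging, hairpin, and partition butterfly-wing patterns (Definitions~\ref{def:tri_bw}--\ref{def:part_bw}) and forbids size-two components in the measurement graph. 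Second, and more fundamentally, your framing suggests that correctness reduces to ``confirming that the free vertices never leave an unbroken transposition symmetry.'' Breaking all automorphism-induced coincidences is necessary but not sufficient: isomorphic triples $(G,\{r,s\},e)$ give equal entries, but so do many non-isomorphic ones---column \textbf{IX} is identically zero for \emph{every} edge avoiding $p_r\cup p_s$, regardless of which distant partitions it joins, and \textbf{II}$=$\textbf{III} whenever the sizes agree. Hence distinctness of columns must be certified by exhibiting, for each pair of edges, a measurement whose two table entries provably differ; this eleven-case analysis over edge placements relative to the triples $T_1,\ldots,T_{\lfloor k/3\rfloor}$ and the residual block $P$ or $B$ is the bulk of the paper's proof (Section~\ref{UB:thm:k-partite}), and your proposal explicitly defers it rather than supplying it. As written, the upper-bound half is a correct plan matching the paper's architecture, but not yet a proof.
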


\bigskip

\begin{remark}
    The upper and lower bounds in Theorem~\ref{thm:k-partite} cannot be improved. Observe that for bipartite graphs with $|p_\gamma| = |p_\beta| + 1$, the bound in Theorem~\ref{thm:bipartite} reduces to $\left\lfloor\left\lfloor\frac{n}{2}\right\rfloor + \frac23\right\rfloor = \left\lfloor\frac{n}{2}\right\rfloor$. This shows that the general lower bound in Theorem~\ref{thm:k-partite} cannot be improved since it is equal to $\ceil{\frac{n}{2}} - 1 = \left\lfloor\frac{n}{2}\right\rfloor$. Similarly, the bound in Theorem~\ref{thm:bipartite} with $|p_\beta| = |p_\gamma|$ is $\left\lfloor\frac{2n}{3}\right\rfloor$ (for graphs with $|p_\beta| \equiv 2\mod{3}$) which is the same as what one would obtain from the upper bound in Theorem~\ref{thm:k-partite}. Thus, the general upper bound in Theorem~\ref{thm:k-partite} cannot be improved.
\end{remark}

\section{Proofs of results}\label{sec:proofs}

{In this section, we give complete proofs of Theorems~\ref{thm:complete}, ~\ref{thm:bipartite}, ~\ref{thm:tripartite}, and ~\ref{thm:k-partite}. 
First, we provide the proofs for the complete graphs, complete bipartite graphs, and complete tripartite graphs, which illustrate the general proof techniques for establishing upper and lower bounds on the smallest number of measurements needed for the faulty edge detection problem. The most technically involved proof is Theorem~\ref{thm:k-partite} that analyzes $k$-partite graphs for arbitrary $k\geq 2$. Its proof is a much more intricate implementation of the general proof ideas illustrated in this section for the complete graph, complete bipartite, and complete tripartite cases. The high level idea in the proofs for the upper and lower bounds is to precisely quantify the amount of information one gains with any given measurement. Given any proposed measurement in an electrical network, two edges can be considered {\it information theoretically equivalent} if the effective resistance values are the same when they are altered. In other words, this measurement will not be able to distinguish two equivalent edges. Therefore, one seeks to find the smallest set of measurements such that for every pair of edges, there exists a measurement that puts them in different equivalence classes. Consequently, it becomes important to understand the equivalence classes of the edges of the graph for any measurement. This is done for complete graphs in Appendix~\ref{incident_edge_lemma_complete} and for complete $k$-partite graphs in Appendix~\ref{A:edge_partition_k-partite}.
\medskip

A conceptual tool that we use for our lower bound proofs is the idea of a {\it measurement graph}. 

\begin{definition}\label{def:measurement-graph}
    Let $(V,E,\mathbf{1})$ be an electrical network and let $\mathcal{M}$ be some set of effective resistance measurements, i.e., $\mathcal{M}$ is a set of unordered pairs of vertices. The graph $G' = (V, \mathcal{M})$ is the {\em measurement graph} corresponding to $\mathcal{M}$.
\end{definition}

Note that the edges in the measurement graph $G' = (V, \mathcal{M})$ are not necessarily edges in the underlying graph $G = (V,E)$ for the electrical network. 
\medskip

Many of our measurement strategies are based on the following type of measurement.

\begin{definition}\label{butterfly_def}
    Let $(V,E,\mathbf{1})$ be an electrical network. A {\em butterfly wing measurement} involving vertices $v_1,v_2,v_3 \in V$ is comprised of two measurements $(v_1,v_2)$ and $(v_2,v_3)$. We refer to $v_2$ as the center node (involved in both measurements) and $v_1, v_3$ as the wing nodes.
\end{definition}

We now describe four types of butterfly wing measurements used in the strategies for $k$-partite graphs.

\begin{definition}\label{def:tri_bw}
    Given partitions $p_\alpha, p_\beta, p_\gamma$ in a $k$-partite graph with \newline $|p_\alpha| \leq |p_\beta| \leq |p_\gamma|$, a {\em tripartite butterfly wing} is formed using vertices $v_1 \in p_\alpha, v_3 \in p_\gamma, v_2 \in p_\beta$ such that $v_3$ is the center node.
\end{definition}

\begin{definition}\label{def:zigzag_bip_bw}
    Given partitions $p_\beta, p_\gamma$ in a $k$-partite graph with $|p_\beta| \leq |p_\gamma|$,  a {\em zig-zagging butterfly wing scheme} is formed by utilizing vertices $v_1,v_3,v_5 \in p_\gamma$ and $v_2,v_4,v_6 \in p_\beta$. This scheme involves two distinct butterfly wings. The first butterfly wing involves $v_1,v_2,v_3$ with $v_2 \in p_\beta$ as the center node. The second involves $v_4,v_5,v_6$ with $v_5 \in p_\gamma$ as the center node. 
\end{definition}

\begin{definition}\label{def:hairpin_bip_bw}
    Given partitions $p_\beta, p_\gamma$ in a $k$-partite graph with $|p_\beta| \leq |p_\gamma|$, we form a {\em hairpin butterfly wing} by utilizing vertices $v_1,v,v_2$ with $v$ as the center node, satisfying the following: either $v \in p_\beta$ and $v_1, v_2 \in p_\gamma$, or else $v \in p_\gamma$, and $v_1, v_2 \in p_\beta$.
\end{definition}

\begin{definition}\label{def:part_bw}
    Given a partition $p_\gamma$ in a $k$-partite graph, a {\em partition butterfly wing} is formed by utilizing vertices $v_1,v_2,v_3 \in p_\gamma$, with any of these three vertices as the center node.
\end{definition}

\subsection{Complete Graphs}\label{sec:complete_graphs}

The characterization of the edge equivalence classes for any measurement given in Lemma~\ref{lem:incident_edge_lemma_complete} from Appendix~\ref{incident_edge_lemma_complete} will be an important tool below. In particular, the altered resistance values in Table~\ref{table:complete_graphs} will be referred to many times.

\subsubsection{Upper Bound in Theorem~\ref{thm:complete}}

Let the vertices of the complete graph be labeled as $v_1, \ldots, v_n$. We first prove that $\left\lceil \frac{2n}{3}\right\rceil$ measurements suffice and then establish a matching lower bound.

\paragraph{Strategy} Partition the vertices into groups of three: $v_1, v_2, v_3$ and $v_4, v_5, v_6$ and so forth, and depending on the value of $n$, we will have zero, one or two vertices left over that cannot be grouped. We use butterfly wing measurements (Definition~\ref{butterfly_def}) on all the groups of size three (with $v_2, v_5, v_8, \ldots $ as the center nodes). If we have one or two vertices left over that were not grouped, then we add measurements with source equal to $v_2$ (the center node in the first group of three) and sink equal to the left over vertices. We claim that these measurements suffice.

\paragraph{Measurement Counts} The number of measurements is $\frac{2n}{3}$ if $n \equiv 0 \mod{3}$, $\frac{2(n-1)}{3} + 1 = \frac{2n}{3} + \frac13$ if $n \equiv 1 \mod{3}$, or $\frac{2(n-2)}{3} + 2 = \frac{2n}{3} + \frac23$ if $n \equiv 2 \mod{3}$. We observe that these numbers are equal to $\left\lceil \frac{2n}{3} \right\rceil$ (in the respective cases).

\paragraph{Correctness} Consider two edges $e, e' \in E$. We have to show there exists a measurement which gives different effective resistance values when these two edges are altered. First, observe that at least one of the edges, say $e$ without loss of generality, must have an endpoint in some group of size three. This is because we have at most two vertices in the graph that are not included in some group of size three. We now do some case analysis. 
    \smallskip

    \noindent\underline{\em Case 1: $e$ has both endpoints in the same group of size three.} 
    
    \noindent If $e'$ does not have an endpoint in this group of size three, then any measurement in the butterfly wing will distinguish these two edges by Lemma~\ref{lem:incident_edge_lemma_complete}. If $e'$ has both endpoints in this group of size three, then there must be a measurement in the butterfly wing in this group of size three whose the endpoints coincide with either $e$ or $e'$, but not both. This measurement will distinguish these two edges by Lemma~\ref{lem:incident_edge_lemma_complete}. Finally, consider the possibility that $e'$ has exactly one endpoint in this group of size three. If the other endpoint of $e'$ is in a different group of size three, then the measurement from the butterfly wing on this other group that shares an endpoint with $e'$ will distinguish $e,e'$ by Lemma~\ref{lem:incident_edge_lemma_complete}. Else, the other endpoint $u$ of $e'$ is one of the left over vertices that were not grouped into groups of size three. If the butterfly wing in the group containing $e$ has a measurement that coincides with $e$, then that measurement will distinguish $e,e'$ by Lemma~\ref{lem:incident_edge_lemma_complete}. So now suppose the butterfly wing in the group containing $e$ is such that the wing nodes are the endpoints of $e$. If $e'$ also has one of these wing nodes as its endpoint, then the measurement with the other wing node will distinguish $e,e'$ by Lemma~\ref{lem:incident_edge_lemma_complete}. Otherwise, we are in the case where $e'$ has the center node $v$ of this butterfly wing as one of its endpoints. In this case, Lemma~\ref{lem:incident_edge_lemma_complete} implies that the measurement using $u$ and $v_2$ will distinguish $e,e'$. This holds in the case when $v = v_2$, i.e., $v$ is the center node of the first butterfly, and also when $v$ is the center node of a different butterfly.
\smallskip

    \noindent\underline{\em Case 2: The two endpoints of $e$ are in different groups of size three.}
    
    \noindent If $e'$ has both endpoints in the same group of size three, then this is symmetrical to Case 1. Thus, we assume $e'$ does not have endpoints in the same group of size three. Suppose $e$ and $e'$ do not share an endpoint. Consider any one of the groups containing an endpoint of $e$. At least one of the measurements in this butterfly wing will share an endpoint with $e$ but not $e'$, or vice versa. This measurement will distinguish $e,e'$ by Lemma~\ref{lem:incident_edge_lemma_complete}. Suppose now that $e$ and $e'$ do share an endpoint. Consider the group of three containing the other endpoint of $e$. There must be a measurement in the butterfly wing in this group that shares an endpoint with $e$ or $e'$ but not both. This measurement will distinguish $e,e'$ by Lemma~\ref{lem:incident_edge_lemma_complete}. 
\smallskip

    \noindent\underline{\em Case 3: One endpoint of $e$ is in a group of size three and the other one is not.} 
    
    \noindent We may assume that one endpoint of $e'$ is in a group of size three and the other one is not. Otherwise, we are in symmetrical cases to Case 1 and Case 2. Thus, both $e$ and $e'$ have endpoints $u$ and $u'$ that were not grouped into the groups of size three. 
    Suppose $u\neq u'$. If $v_2$ is the endpoint of either $e$ or $e'$, we can use the measurement corresponding to that edge which will distinguish it from the other edge, by Lemma~\ref{lem:incident_edge_lemma_complete}. Otherwise, $v_2$ is not the other endpoint of $e$ or $e'$; then the measurement with source $v_2$ and sink $u$ will distinguish the edges, by Lemma~\ref{lem:incident_edge_lemma_complete}. Suppose now $u=u'$. If the other endpoints of $e$ and $e'$ are in different groups of size three, then any measurement from the butterfly wing in the group containing an endpoint of $e$ will distinguish $e,e'$ by Lemma~\ref{lem:incident_edge_lemma_complete}. Finally, consider the case when the other endpoints of $e$ and $e'$ are in the same group of size three. Then one of the measurements in the butterfly wing on this group of size three shares an endpoint with $e$ or $e'$ but not both, which will distinguish $e,e'$ by Lemma~\ref{lem:incident_edge_lemma_complete}.
\smallskip

    \noindent\underline{\em Case 4: Neither endpoint of $e$ is in a group of size three.} 
    
    \noindent This means both endpoints $u_1$ and $u_2$ of $e$ are left over vertices from the grouping into size three groups. One of these vertices, say $u_1$ without loss of generality, is not an endpoint of $e'$. If $v_2$ is not an endpoint of $e'$ then the measurement with source $v_2$ and sink $u_1$ will distinguish $e,e'$ by Lemma~\ref{lem:incident_edge_lemma_complete}. Thus, assume $v_2$ is an endpoint of $e'$. Then, the measurement with source $v_1$ and sink $v_2$ (i.e., a measurement from the first group of size three) will distinguish $e,e'$ by Lemma~\ref{lem:incident_edge_lemma_complete}. 

\subsubsection{Lower Bound in Theorem~\ref{thm:complete}} Let $\mathcal{M}$ be any set of measurements that solves the faulty edge detection problem for the complete graph on $n$ vertices and consider the measurement graph $G' = (V, \mathcal{M})$ from Definition~\ref{def:measurement-graph}. We first observe that there can be no connected component of size two in $G'$. If we had a component of size two $C_1 = \{r,s\}$, then for any vertex $d \in V\setminus\{r,s\}$ (which exists since $n\geq 6$), we will not be able to distinguish edge $(r,d)$ from $(s,d)$ by Lemma~\ref{lem:incident_edge_lemma_complete}. We now consider two cases:
\smallskip

\noindent\underline{\em Case 1: $G'$ has an isolated vertex.} 

\noindent First, there can be at most one isolated vertex in $G'$. Indeed, suppose $u$ and $w$ are both isolated vertices in $G'$, then for any vertex $v \in V\setminus \{u,w\}$, no measurement will distinguish the edges $(u,v)$ and $(w,v)$ by Lemma~\ref{lem:incident_edge_lemma_complete}. Moreover, since we have no size two components in $G'$ by the observation before, all other connected components are of size three or more. We now claim that all size three connected components of $G'$ must be triangles, i.e., all the edges must be measurements in $\mathcal{M}$. Otherwise, suppose we have a size three component with vertices $a,b,c$ and edges $(a,b)$ and $(a,c)$. Let the isolated vertex in $G'$ be $v$. Then no measurement can distinguish the edges $(v,a)$ and $(b,c)$ by Lemma~\ref{lem:incident_edge_lemma_complete}. Let $q$ be the number of size three components in $G'$. Every component of size $s\geq 4$ must have at least $s - 1 \geq 3s/4$ edges. Thus, the number of measurements in $\mathcal{M}$ is at least $$3q + \frac{3(n - 3q - 1)}{4} = \frac{3n}{4} + \frac{3q}{4} - \frac34 \geq \frac{3n}{4} - \frac34.$$ Since the number of measurements is an integer, we have a lower bound of $\left\lceil \frac{3n}{4} - \frac34 \right\rceil$ which is at least $\left \lceil \frac{2n}{3}\right \rceil$ for all $n \geq 6$.
\smallskip

 \noindent\underline{\em Case 2: $G'$ has no isolated vertices.} 
 
 \noindent In this case, each component of $G'$ has at least three vertices, and any such component of size $s \geq 3$ must have at least $s-1 \geq \frac{2s}{3}$ edges. Thus, overall, $G'$ must have at least $\frac{2n}{3}$ edges and we have our lower bound of $\left \lceil \frac{2n}{3}\right \rceil$ since the number of measurements must be an integer.

 \subsection{Bipartite Graphs}\label{sec:bipartite_graphs}

The characterization of the edge equivalence classes for any measurement in a complete $k$-partite graph, for $k\geq 2$, given in Lemma~\ref{incdient_edge_lemma_for_kpartite} from Appendix~\ref{A:edge_partition_k-partite} will be an important tool below. In particular, the altered resistance values in Tables~\ref{table:k_part-diff-part} and~\ref{table:k_part-same-part} will be referred to many times. In these two tables, we use the convention that for any measurement $(r,s)$, we use $p_r$ and $p_s$ to denote the partitions containing $r$ and $s$, respectively.

\subsubsection{Upper Bound in Theorem~\ref{thm:bipartite}}

Let $B$ be a bipartite graph comprised of partitions $p_\beta,p_\gamma$ such that $|p_\beta| \leq |p_\gamma|$. We first prove that $\left\lfloor{\frac{2}{3}|p_\gamma| + \frac{1}{3}|p_\beta|}\right\rfloor$ measurements suffice when $|p_\beta| < |p_\gamma|$ and $\left\lfloor{\frac{2n}{3}}\right\rfloor$ measurements suffice when $|p_\beta| = |p_\gamma|$.

\paragraph{Strategy} 
We have two variants of the strategy to deal with the cases when the partitions are different sizes and when they have the same size.

\begin{definition}[Bipartite Graph Strategy with Different Sized Partitions]\label{def:bip_different_sizes_strategy} Given bipartite graphs $B$ comprised of partitions $p_\beta,p_\gamma$ such that $|p_\beta| < |p_\gamma|$. Maintain two designated nodes $i_\beta \in p_\beta$ and $i_\gamma \in p_\gamma$. First, place as many disjoint measurements $(u,v)$ such that $u \in p_\beta\setminus\{i_\beta\}$ and $v \in p_\gamma\setminus\{i_\gamma\}$. If there are nodes left in $p_\gamma\setminus\{i_\gamma\}$ that are not designated or used in a measurement, group them into groups of size three and place butterfly wing measurements on these groups. 
If one node $v_\gamma$ remains in $p_\gamma\setminus\{i_\gamma\}$ that has not been utilized in any measurement so far, make measurement $(v_\gamma,w_\gamma)$ where $w_\gamma$ is any node in $p_\gamma\setminus\{v_\gamma,i_\gamma\}$. If two nodes $u_\gamma, v_\gamma$ remain that have not been utlized in any measurement, place a butterfly wing measurement involving $u_\gamma, v_\gamma, i_\gamma$.
\end{definition}

\begin{definition}[Bipartite Graph Strategy with Same Sized Partitions]\label{def:bip_same_size_strategy}     Given bipartite graphs $B$ comprised of partitions $p_\beta,p_\gamma$ such that $|p_\beta| = |p_\gamma|$. Maintain two designated nodes $i_\beta \in p_\beta$ and $i_\gamma \in p_\gamma$.
    Place as many disjoint zig-zagging bipartite wing measurements (Definition \ref{def:zigzag_bip_bw}) as possible. If one node $v \in p_\beta\setminus\{i_\beta\}$ remains that is unable to be placed in a zig-zagging bipartite wing measurement scheme, we place a hairpin butterfly wing measurement (Definition~\ref{def:hairpin_bip_bw}) disjoint from all measurements so far, with $v$ as the center node and $i_\gamma$ as one of the wing nodes. If two nodes $u,v \in p_\beta\setminus\{i_\beta\}$ remain that are unable to be placed in a zig-zagging bipartite wing measurement scheme, we place a disjoint hairpin butterfly wing measurement with $w \in p_\gamma\setminus\{i_\gamma\}$ as its center node, such that $w$ has not been used in a previously described measurement. There should be one remaining node $t \in p_\gamma\setminus\{i_\gamma\}$ that is not involved in any measurement so far. We place one additional measurement from $t$ to $w$.
\end{definition}

\paragraph{Measurement Counts} It can be verified that the strategies in Definitions~\ref{def:bip_different_sizes_strategy} and~\ref{def:bip_same_size_strategy} give the stated measurement counts in Theorem~\ref{thm:bipartite}.
%

\paragraph{Correctness} The following is the proof of correctness for the strategy outlined in Definition~\ref{def:bip_different_sizes_strategy} for bipartite graphs with different sizes. The proof of the correctness of the strategy outlined in Definition~\ref{def:bip_same_size_strategy} is provided in Case 9 of the proof of correctness for $k$-partite graphs (Section~\ref{UB:thm:k-partite}). 

Recall we label the partitions $p_\beta$ and $p_\gamma$ with $|p_\beta| < |p_\gamma|$. Consider two edges $(a,b)$ and $(a',b')$ with $a, a' \in p_\beta$ and $b,b' \in p_\gamma$. Since $e \neq e'$, either $a \neq a'$ or $b\neq b'$. Recall that $p_\beta$ and $p_\gamma$ each contain at most one designated node not involved in any measurement. Further, we recognize that measurements that are within butterfly wing measurements are contained within $p_\gamma$. Now we consider two cases. 
\smallskip

 \noindent\underline{\em Case 1: $a \neq a'$}. Since $a \neq a'$, there must exist a measurement $(r,s)$ in the strategy from Definition~\ref{def:bip_different_sizes_strategy} that involves exactly one of $a$ or $a'$; without loss of generality, let $a$ be this vertex with $a = r$. Suppose first that $b = b'$. If $b=s$, when $e$ is altered, the effective resistance value comes from column \textbf{I} in Table \ref{table:k_part-diff-part}, and for $e'$, the value comes from column \textbf{III} as $a' \in p_r, a' \neq r$ and $b' = s$. If $b \neq s$, then  the effective resistance value comes from column \textbf{II} when $e$ is altered and, since $a' \in p_r, a' \neq r$ and $b' \in p_s, b' \neq  s$, for $e'$, the effective resistance value comes from column \textbf{IV}. Next, suppose $b \neq b'$. If $b=s$, when $e$ is altered, the effective resistance value comes from column \textbf{I} in Table \ref{table:k_part-diff-part}, and for $e'$, the value comes from column \textbf{IV} as $a' \in p_r, a' \neq r$ and $b' \neq s$. If $b' = s$, then the effective resistance value when $e$ is altered comes from column \textbf{II} in Table \ref{table:k_part-diff-part} since $a = r$ and $b \in p_s, b\neq s$, and when $e'$ is altered, the value comes from column \textbf{III} since $a' \in p_r, a' \neq r$ and $b' = s$. Note that $|p_r| \neq |p_s|$ because the bipartite graph has partitions of different sizes. Thus, the value in column \textbf{II} is different from the value in column \textbf{III}. Finally, if $s$ is not equal to $b$ or $b'$, then the effective resistance value when $e$ is altered comes from \textbf{II} and, since $a' \in p_r, a' \neq r$ and $b' \in p_s, b' \neq  s$, for $e'$, the effective resistance value comes from column \textbf{IV}.
\smallskip

\noindent\underline{\em Case 2: $a' = a$}. This implies $b \neq b'$. Observe that there must exist a measurement $(r,s)$ in the strategy from Definition~\ref{def:bip_different_sizes_strategy} that involves $b$ or $b'$ but not both; without loss of generality, let $b$ be this vertex with $b = s$. If the measurement endpoints are in different partitions, i.e., $s \notin p_r$, when $e$ is altered, the effective resistance value comes from column \textbf{I} in Table \ref{table:k_part-diff-part} if $a=r$ and $b=s$, and for $e'$, the value comes from column \textbf{II} as $a' = r$ and $b' \in p_s, b' \neq s$. Another possibility is  that $a \in p_r, a\neq r$ and $b = s$, meaning that when $e$ is altered the effective resistance value comes from \textbf{III}, and for $e'$, the value comes from \textbf{IV} as $a' \in p_r, a' \neq r$ and $b' \in p_s, b' \neq s$. Now, suppose the measurement endpoints are in the same partition, i.e. $s \in p_r$. When $e$ is altered, the effective resistance  value comes from column \textbf{X} in Table \ref{table:k_part-same-part} as $a \notin p_r$ and $ b = s$ and, since $a' \notin p_r$ and $b' \in p_s, b' \neq s$, for $e'$ the value comes from column \textbf{XI} (the endpoint labels on edges $(a,b)$ and $(a',b')$ should be switched to match with the convention in Table \ref{table:k_part-same-part}).

\subsubsection{Lower Bound in Theorem~\ref{thm:bipartite}}

Let $\mathcal{M}$ be any set of measurements that solves the faulty edge detection problem for the bipartite graph and consider the measurement graph $G' = (V,\mathcal{M})$ from Definition \ref{def:measurement-graph}. 

We first discuss the lower bound achieved when $|p_\beta| < |p_\gamma|$. We observe that we can have at most one isolated vertex in each partition in $G'$. Otherwise, let $a,a'$ be two isolated vertices in the same partition $p_a$ and $b \in p_b$ be any vertex in $G$ with $p_a \neq p_b$. Since $a, a'$ are in the same partition and they are not the endpoint of any measurement, for both edges $(a,b)$ and $(a',b)$, the effective resistance value will come from the same column in Tables~\ref{table:k_part-diff-part} and~\ref{table:k_part-same-part}
when they are altered. Similarly, a component of size two in $G'$ cannot be formed using two vertices in the same partition since two edges with these as endpoints and any other vertex in the other partition as the other endpoint cannot be distinguished by any measurement. Let $q_2$ be the number of size two components in $\mathcal{M}$. Since there can only be components of size two with endpoints in different partitions, the number of components of size two components is at most $|p_\beta|$. The remaining components of size $s \geq 3$ must have at least $2s/3$ edges. Therefore, the number of measurements in $\mathcal{M}$ is at least
$$\begin{array}{rcl}
q_2 + \frac{2(n-2-2q_2)}{3} & = &\frac{2n}{3} - \frac{q_2}{3} - \frac43 \\
& \geq &\frac{2n}{3} - \frac{|p_\beta|}{3} - \frac43 \\
& = & \frac{2|p_\gamma|}{3} + \frac{|p_\beta|}{3} - \frac43 
\end{array}$$

Since the number of measurements is an integer, this gives at least $\left\lfloor\frac{2|p_\gamma|}{3} + \frac{|p_\beta|}{3}\right\rfloor - 1$ measurements. This argument can be sharpened when $|p_\gamma| - |p_\beta|  \equiv 1\mod{3}$ or $|p_\gamma| - |p_\beta|  \equiv 2\mod{3}$. Note that in this case, if we have an isolated vertex in both partitions, then $q_2 \leq |p_\beta| - 1$. If $q_2 \leq |p_\beta| - 2$, then the right hand side of the second inequality above becomes $\frac{2n}{3} - \frac{|p_\beta| - 2}{3} - \frac43 = \frac{2}{3}(|p_\gamma| - |p_\beta|) + |p_\beta| -\frac23$. Since $|p_\gamma| - |p_\beta|  \equiv 1\mod{3}$ or $|p_\gamma| - |p_\beta|  \equiv 2\mod{3}$, $\frac{2}{3}(|p_\gamma| - |p_\beta|) + |p_\beta| -\frac23 \geq \left\lfloor\frac{2|p_\gamma|}{3} + \frac{|p_\beta|}{3}\right\rfloor.$ If $q_2 = |p_\beta| - 1$, then observe that we have components of size at least three on the remaining $|p_\gamma| - |p_\beta|$ vertices in the $p_\gamma$ component. In other words, we have at least $\left\lceil \frac{2(|p_\gamma| - |p_\beta|)}{3} \right\rceil$ edges, and therefore, in total, $\left\lceil \frac{2(|p_\gamma| - |p_\beta|)}{3} \right\rceil + |p_\beta| - 1$ edges. Since $|p_\gamma| - |p_\beta|  \not\equiv 0\mod{3}$, $\left\lceil \frac{2(|p_\gamma| - |p_\beta|)}{3} \right\rceil + |p_\beta| - 1 \geq \left\lfloor \frac{2(|p_\gamma| - |p_\beta|)}{3} \right\rfloor + 1 + |p_\beta| - 1 = \left\lfloor\frac{2|p_\gamma|}{3} + \frac{|p_\beta|}{3}\right\rfloor$.

We now consider the case where $|p_\beta| = |p_\gamma|$. As before, we can have at most 2 isolated vertices in $G'$, and components of size two must have endpoints in both partitions. Moreover, we can have at most one component of size two. Indeed, if $r,s$ and $r',s'$
are two such measurements with $r, r'$ in the same partition, then no measurement can distinguish the edges $(r,s')$ and $(r',s)$ since the sizes of the partitions are the same. Moreover, if one has 2 isolated vertices in $G'$, then one cannot have a component of size two in $G'$: Let $u,u'$ be the isolated vertices and let $(v,v')$ be the single edge, with $u,v$ in the same partition and $u',v'$ in the same partition. Then no measurement in $G'$ can distinguish between the edges $(u,v')$ and $(u',v)$. Indeed, when looking at $(v,v')$ as a measurement, the effective resistance values of the edges $(u,v')$ and $(u',v)$ will be the same as they come from columns \textbf{II} and \textbf{III} in Table \ref{table:k_part-diff-part} with $|p_r| = |p_s|$. When looking at any other measurement in $G'$, the values will come from column \textbf{IV} in Table \ref{table:k_part-diff-part} when the measurement endpoints are in different partitions and from columns \textbf{XI} in Table \ref{table:k_part-same-part} when the measurement endpoints are in the same partition. Therefore, the number of measurements is at least $\min\left\{\frac{2(n-2)}{3}, 1+\frac{2(n-3)}{3}\right\} \geq \frac{2n}{3} - \frac43.$
Since the number of measurements must be an integer, this means we must have at least $\left\lceil{\frac{2n}{3} - \frac43}\right\rceil$ measurements. We now observe that when $|p_\beta| \equiv 0\mod{3}$ or $|p_\beta| \equiv 1\mod{3}$, i.e., $n \equiv 0\mod{3}$ or $n \equiv 2\mod{3}$, $\ceil{\frac{2n}{3} - \frac43} \geq \floor{\frac{2n}{3}} - 1$, and when $|p_\beta| \equiv 2\mod{3}$, i.e., $n \equiv 1\mod{3}$, $\ceil{\frac{2n}{3} - \frac43} = \ceil{\floor{\frac{2n}{3}} + \frac23 - \frac43} = \floor{\frac{2n}{3}} + \ceil{\frac23 - \frac43} = \floor{\frac{2n}{3}}$.

\subsection{Complete tripartite graphs}\label{sec:tripartite}

Using the same notational conventions and similar techniques used in the proof of Theorem~\ref{thm:bipartite} in Section~\ref{sec:bipartite_graphs}, we present the proof for complete tripartite graphs.

\subsubsection{Upper Bound in Theorem~\ref{thm:tripartite}}

\paragraph{Strategy}
For the case that $|p_\alpha| = |p_\beta| = |p_\gamma|$, the strategy is to maintain three designated vertices, one in each partition, and to place tripartite butterfly wing measurements (Definition \ref{def:tri_bw}) on the remaining vertices. 

For the case that $|p_\alpha| < |p_\beta| < |p_\gamma|$, the strategy is to maintain three designated vertices, one in each partition. If $n$ is odd, there exists a perfect matching on the remaining $n-3$ vertices and we use the measurements that coincide with this matching. If $n$ is even, place a tripartite butterfly wing measurement (Definition \ref{def:tri_bw}) and the remaining vertices will have a proper matching that will dictate which measurements should be selected. 

For the case that $|p_\alpha| = |p_\beta| < |p_\gamma|$, the strategy is again to maintain three designated vertices $v_\alpha, v_\beta, v_\gamma$, one in each partition, and then place as many matching measurements as possible from vertices in $p_\alpha/\{v_\alpha\}$ to vertices in $p_\gamma/\{v_\gamma\}$, and then as many as possible from vertices in $p_\beta/\{v_\beta\}$ to vertices in $p_\gamma/\{v_\gamma\}$. If any vertices remain in $p_\beta/\{v_\beta\}$ or $p_\gamma/\{v_\gamma\}$ (note that we cannot have vertices left over in both $p_\beta/\{v_\beta\}$ and $p_\gamma/\{v_\gamma\}$), place as many disjoint partition butterfly wing measurements (Definition \ref{def:part_bw}) as possible, on the vertices that are left over. If there is one vertex $v$ left over, which is not the designated vertex in this partition, and it is not involved in any measurement so far, place a measurement from $v$ to any previously used vertex in that partition. If there are two vertices left over, place a partition butterfly measurement involving these two and the designated vertex in this partition.

For the case that $|p_\alpha| < |p_\beta| = |p_\gamma|$, the strategy is again to maintain three designated vertices $v_\alpha, v_\beta, v_\gamma$, one in each partition, and then place as many matching measurements as possible from $p_\alpha/\{v_\alpha\}$ to $p_\beta/\{v_\beta\}$. Next, we place as many disjoint partition butterfly wing measurements (Definition \ref{def:part_bw}) as possible, on the vertices that are left over in $p_\beta/\{v_\beta\}$ and $p_\gamma/\{v_\gamma\}$. Now, for both partitions $p_\beta$ and $p_\gamma$, we do the following. If there is one vertex left over, which is not the designated vertex in that partition, and it is not involved in any measurement so far, place a measurement from that vertex to any previously used vertex in that partition. If there are two vertices left over, place a partition butterfly measurement involving these two and the designated vertex in that partition.

\paragraph{Measurement Counts}
We provide the bounds for the four cases in Table \ref{tab:bounds_tripart} under the column titled ``Upper Bound''.

\paragraph{Correctness} We leave the proofs of correctness to the reader. However, the proofs follow a similar argument to that provided in Case 2 of the proof of correctness for $k$-partite graphs.

\subsubsection{Lower Bound in Theorem~\ref{thm:tripartite}}

\begin{claim}\label{LB:tripartite_diff_sizes}
    Given a complete tripartite graph $G$ with partitions of the different sizes $|p_\alpha| < |p_\beta| < |p_\gamma|$, the lower bound on the number of measurements needed is $\left\lceil{\frac{n-3}{2}}\right\rceil$.
\end{claim}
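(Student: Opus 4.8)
The plan is to adapt the lower-bound argument already developed for bipartite graphs in Section~\ref{sec:bipartite_graphs} to the tripartite setting with all three partitions of distinct sizes. Let $\mathcal{M}$ be any feasible set of measurements and let $G' = (V,\mathcal{M})$ be the associated measurement graph. The strategy is to count edges in $G'$ from below by understanding which small connected components are forbidden, using the edge equivalence classes recorded in Tables~\ref{table:k_part-diff-part} and~\ref{table:k_part-same-part} via Lemma~\ref{incdient_edge_lemma_for_kpartite}. The key structural feature that distinguishes this case from the bipartite one is that, because all three partition sizes are \emph{distinct}, the effective resistance values discriminate much more finely: two edges altered separately are confused only under very restrictive coincidences of partitions and endpoints.

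First I would rule out isolated vertices and overly small components. As in the bipartite proof, if two vertices $a,a'$ in the same partition were both isolated (or more generally both uninvolved in any measurement touching them in a distinguishing way), then for a vertex $b$ in another partition the edges $(a,b)$ and $(a',b)$ would draw their altered-resistance values from the same column of the tables and hence be indistinguishable; so at most one isolated vertex per partition is allowed. Next I would argue that size-two components are essentially forbidden (or severely limited) here: a single edge $(r,s)$ as a component, with $r,s$ in two of the partitions, fails to separate certain pairs of edges whose endpoints sit in the third partition or reuse $r,s$, and the \emph{distinctness} of all three partition sizes removes the ``escape route'' (available in the equal-size bipartite analysis) where columns \textbf{II} and \textbf{III} coincide. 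The upshot should be that essentially every connected component of $G'$ has size at least three.

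With every component of size $s \geq 3$ contributing at least $s-1 \geq \tfrac{2s}{3}$ edges — but here I expect the sharper bound $s - 1 \ge \tfrac{s-1}{1}$ combined with a counting over $n - (\text{isolated vertices})$ vertices — I would sum the edge counts. Allowing one isolated vertex in each of the at most three partitions removes at most three vertices from the spanning-forest count, and a forest on the remaining vertices split into components of size at least two has at least $\tfrac{1}{2}(\text{covered vertices})$ edges, yielding the bound $\left\lceil \tfrac{n-3}{2}\right\rceil$. The precise bookkeeping is to check that the worst case is a perfect matching on $n-3$ vertices (three vertices left isolated, one per partition), which is exactly achievable by the matching-based upper-bound strategy described for this case, and to confirm via the tables that such a matching is in fact feasible so that no component can be forced larger.

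The main obstacle will be the size-two-component analysis. In the bipartite equal-size case a careful argument was needed to show that isolated vertices and size-two components cannot coexist; here I must instead verify, using the distinctness $|p_\alpha| < |p_\beta| < |p_\gamma|$, that a size-two component together with the permitted isolated vertices does not create an indistinguishable pair of edges, and crucially that the bound $\left\lceil\tfrac{n-3}{2}\right\rceil$ is not beaten by trading a size-three component for a size-two component plus an extra isolated vertex. This requires a delicate case check over which partitions the endpoints of the size-two edge and the isolated vertices occupy, reading off the relevant columns of Tables~\ref{table:k_part-diff-part} and~\ref{table:k_part-same-part} and using that all three partition sizes differ so that no two columns collapse to equal values. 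I expect this to be where the real work lies; the final arithmetic aggregating the edge counts into $\left\lceil\tfrac{n-3}{2}\right\rceil$ should be routine once the component-size constraints are pinned down.
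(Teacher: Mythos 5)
Your opening step---at most one isolated vertex per partition, justified by reading off the same column of Tables~\ref{table:k_part-diff-part} and~\ref{table:k_part-same-part} for $(a,b)$ versus $(a',b)$---matches the paper's proof exactly. But your plan then takes a wrong turn: you propose to show that size-two components are ``essentially forbidden,'' so that ``essentially every connected component of $G'$ has size at least three.'' That claim is false, and any attempt to prove it must fail: the paper's own \emph{upper-bound} strategy for $|p_\alpha| < |p_\beta| < |p_\gamma|$ is precisely a perfect matching on the $n-3$ non-designated vertices (plus one butterfly wing when $n$ is even), i.e., a feasible measurement graph consisting almost entirely of size-two components. If components of size at least three were forced, the lower bound would be roughly $\tfrac{2}{3}(n-3)$, contradicting the achievable $\lceil\tfrac{n-3}{2}\rceil$. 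The distinctness $|p_\alpha| < |p_\beta| < |p_\gamma|$ cuts in the opposite direction from what you suggest: it is exactly what makes columns \textbf{II} and \textbf{III} of Table~\ref{table:k_part-diff-part} differ, so a bare matching edge \emph{is} informative and size-two components survive rather than being ruled out.

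The second issue is that no component analysis beyond isolated vertices is needed at all. The paper's proof stops right after the isolated-vertex bound: at least $n-3$ vertices have degree at least $1$ in $G'$, so the degree sum is at least $n-3$, and by the Handshaking Lemma $|\mathcal{M}| \geq \lceil\tfrac{n-3}{2}\rceil$. Your final counting in fact retreats to exactly this (components of size at least two, worst case a matching on $n-3$ vertices, three isolated vertices allowed), which is correct---but it is inconsistent with your announced size-three claim, and the ``delicate case check'' you flag as the real work is a detour that would end with you disproving your own intermediate step. Strike the size-two-component analysis entirely and your proposal collapses to the paper's short degree-counting argument.
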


The proof of this lower bound is similar to that of the lower bound proof of Theorem \ref{thm:k-partite} in Section~\ref{LB:thm:k-partite}. Let $\mathcal{M}$ be any set of measurements that solves the faulty edge detection problem for the tripartite graph and consider the measurement graph $G' = (V,\mathcal{M})$ from Definition~\ref{def:measurement-graph}. As in Section~\ref{LB:thm:k-partite}, we recognize that there can be at most one isolated vertex in each partition in $G'$. Otherwise, let $a,a'$ be two isolated vertices in the same partition and let $b$ be any vertex in a different partition of $G'$. Consider a measurement $(r,s)$. Because $a$ and $a'$ are in the same partition and they are not an endpoint of any measurements, when $(a,b)$ or $(a',b)$ are altered, they will have the same effective resistance value coming from columns \textbf{III,IV,VI,VII} or \textbf{VIII} in Table \ref{table:k_part-diff-part} if $s \not\in p_r$, and from columns \textbf{X, XI} or \textbf{XII} in Table \ref{table:k_part-same-part} if $s \in p_r$. Thus, there are at least $n-3$ vertices with degree at least 1 in $G'$, meaning the sum of the degrees in $G'$ is at least $n-3$. By the Handshaking Lemma, there must be at least $\frac{n-3}{2}$ edges in $G'$ and since the number of edges must be an integer, there are at least $\left\lceil\frac{n-3}{2}\right\rceil$ edges in $G'$.
\smallskip

\begin{claim}\label{LB:tripartite_same_sizes}
    Given complete tripartite graph $G$ with partitions of the same size $|p|$, the lower bound on the number of measurements needed is $2|p|-2$.
\end{claim}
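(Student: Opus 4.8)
The plan is to follow the measurement-graph template from Claim~\ref{LB:tripartite_diff_sizes} and Section~\ref{LB:thm:k-partite}, but to exploit the extra symmetry forced by $|p_\alpha| = |p_\beta| = |p_\gamma| = |p|$ in order to rule out the ``cheap'' components that the bare handshaking bound $\ceil{\frac{n-3}{2}}$ tolerates. Writing $n = 3|p|$ and noting $\ceil{\frac{2}{3}n - 2} = 2|p| - 2$, it suffices to show that any solving set $\mathcal{M}$ has $|\mathcal{M}| \ge \frac{2}{3}n - 2$. The engine is that, for equal part sizes, many entries of Tables~\ref{table:k_part-diff-part} and~\ref{table:k_part-same-part} that are distinct for generic sizes now coincide: in particular columns \textbf{II} and \textbf{III} of Table~\ref{table:k_part-diff-part} agree whenever $|p_r| = |p_s|$, and the three parts become freely interchangeable. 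I would first record these collapses explicitly as consequences of Lemma~\ref{incdient_edge_lemma_for_kpartite}.

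Next I would extract structural constraints on the measurement graph $G' = (V, \mathcal{M})$ (Definition~\ref{def:measurement-graph}). As in Claim~\ref{LB:tripartite_diff_sizes}, each part contains at most one isolated vertex, since two isolated vertices $a, a'$ in the same part make edges $(a,b)$ and $(a',b)$ indistinguishable for any $b$ outside the part; hence the number $j$ of isolated vertices satisfies $j \le 3$. I would then show $G'$ has no size-$2$ component with both endpoints in the same part (the two degree-one endpoints $r,s$ play symmetric roles, so $(r,x)$ and $(s,x)$ collide for every $x$ outside the part), and at most one size-$2$ component for each of the three cross-part types $\alpha\beta,\alpha\gamma,\beta\gamma$: if $\{r_1,s_1\}$ and $\{r_2,s_2\}$ were two size-$2$ components of the same type, then $(r_1,s_2)$ and $(r_2,s_1)$ would agree under every measurement (the measurement edges $(r_i,s_i)$ give matching columns by the \textbf{II}/\textbf{III} collapse, and all other measurements see two disjoint edges of identical type). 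Thus the number $q_2$ of size-$2$ components satisfies $q_2 \le 3$.

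With these in hand the count is routine: every remaining component has at least three vertices, and a connected graph on $s \ge 3$ vertices has at least $s-1 \ge \frac{2s}{3}$ edges, so $|\mathcal{M}| \ge q_2 + \frac{2}{3}(n - j - 2q_2) = \frac{2}{3}n - \frac{2}{3}j - \frac{1}{3}q_2$. Since $|\mathcal{M}|$ is an integer, this already yields $2|p|-2$ provided $2j + q_2 \le 8$. Because $j \le 3$ and $q_2 \le 3$, the only way to violate this is the corner $j = 3,\ q_2 = 3$. The final, and most delicate, ingredient is therefore an interaction lemma excluding exactly this corner: one cannot simultaneously have an isolated vertex in every part and a size-$2$ component of every cross-part type. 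I would prove this by exhibiting, from such a configuration, two edges that coincide under all of $\mathcal{M}$, built out of the three isolated vertices and the endpoints of the size-$2$ components, again invoking the equal-size column collapses.

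I expect the main obstacle to be precisely this last interaction step and, more generally, the careful bookkeeping of which small-component configurations the tables actually forbid. Unlike the pure handshaking argument, these arguments are sensitive to whether $\mathcal{M}$ contains a distinguishing measurement with an endpoint in exactly one of the two relevant parts, so the forbidden pairs must be chosen so that \emph{every} measurement in $\mathcal{M}$ — including ones that break the $\alpha/\beta/\gamma$ symmetry — still fails to separate them. Getting this exactly right, so that the bound lands on $2|p|-2$ rather than one less, is the technical heart; it is the same difficulty resolved in full generality in Section~\ref{LB:thm:k-partite}, and I would model the equal-size tripartite proof directly on that argument.
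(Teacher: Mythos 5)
Your proposal is correct and takes essentially the same route as the paper: the same measurement-graph observations (at most one isolated vertex per part, no within-part size-two components, at most one size-two component per pair of parts, all via the \textbf{II}/\textbf{III} column collapse when $|p_r|=|p_s|$), followed by the same component-counting bound. Your endgame is a mild streamlining --- a single count $|\mathcal{M}| \ge 2|p| - \frac{2j+q_2}{3}$ requiring only the corner $(j,q_2)=(3,3)$ to be excluded, whereas the paper proves the stronger pairwise restriction (two parts each containing an isolated vertex cannot also host a size-two component between them, which forces $q_2 \le 2$ when $j=2$ and $q_2=0$ when $j=3$) and then runs three cases over $j \in \{1,2,3\}$ --- but the indistinguishable pair you would exhibit for the exclusion, $(a,i_\beta)$ versus $(i_\alpha,b)$ for a size-two component $(a,b)$ with isolated vertices $i_\alpha,i_\beta$ in its two parts, is exactly the paper's construction, so both arguments rest on the identical key lemma.
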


\begin{proof}
    We first make the following observations about the measurement graph $G'$ (Definition~\ref{def:measurement-graph}). First, there can be no more than one isolated node in $G'$ within any given partition. This follows from the same argument described in the previous claim. Second, a pair of partitions can have at most one component of size two between them. Otherwise, let vertices $a \in p_a$ and $b \in p_b$ form one component and $a' \in p_a$ and $b' \in p_b$ form the other, meaning $(a,b)$ and $(a',b')$ are two measurements. Since there are no other measurement edges in $G'$ incident on $a,b,a',b'$, the edges $(a,b')$ and $(a',b)$ will give the same effective resistance value when altered. Indeed, when looking at $(a,b)$ or $(a',b')$ as a measurement, the effective resistance values of these edges come from columns \textbf{II} and \textbf{III} in Table \ref{table:k_part-diff-part}, which are equal as $|p_r| = |p_s|$. When looking at any other measurement, their values will be the same and come from columns \textbf{IV}, \textbf{VI}, or \textbf{VIII} in Table \ref{table:k_part-diff-part} when the measurement endpoints are in different partitions, and from columns \textbf{XI} or \textbf{XII} in Table \ref{table:k_part-same-part} when the measurement endpoints are in the same partition. Third, it is not possible for two partitions to each have isolated vertices in $G'$ and also contain the endpoints of a size two component in $G'$. Otherwise, suppose $(a,b) \in E(G')$ and $i_a, i_b$ are two isolated nodes such that $a,i_a \in p_a$ and $b, i_b \in p_b$. There are no other measurement edges in $G'$ incident on $a,b,i_a,i_b$, which implies that edges $(a,i_b)$ and $(i_a,b)$ will be indistinguishable. Indeed, when looking at $(a,b)$ as a measurement, the effective resistance values of the edges will be the same as they come from columns \textbf{II} and \textbf{III} in Table \ref{table:k_part-diff-part} with $|p_r| = |p_s|$. When looking at any other measurement in $G'$, the values will come from columns \textbf{IV}, \textbf{VI}, and \textbf{VIII} in Table \ref{table:k_part-diff-part} when the measurement endpoints are in different partitions and from columns \textbf{XI} and \textbf{XII} in Table \ref{table:k_part-same-part} when the measurement endpoints are in the same partition.

    We now analyze three different cases. The first case is that there is one isolated node in $G'$. Let $q_2$ be the number of size two components in $\mathcal{M}$. Due to the second restriction, $q_2 \leq 3$. The remaining components will therefore be of size $s \geq 3$ and they must have at least $s - 1 \geq 2s/3$ edges. Therefore, the number of measurements in $\mathcal{M}$ is at least
    $$\begin{array}{rcl}
    q_2 + \frac{2(n-1-2q_2)}{3} & = &\frac{2n}{3} - \frac{q_2}{3} - \frac23 \\
    & \geq &\frac{2n}{3} - \frac{3}{3} - \frac23 \\
    & = & \frac{2(3|p|)}{3} - \frac53 \\
    & \geq & 2|p| - 2
    \end{array}$$

    The second case is that there are two isolated nodes in $G'$. The second and third restrictions imply that $q_2 \leq 2$. The remaining components of size $s \geq 3$ must have at least $2s/3$ edges. Therefore, the number of measurement in $\mathcal{M}$ is at least
    $$\begin{array}{rcl}
    q_2 + \frac{2(n-2-2q_2)}{3} & = &\frac{2n}{3} - \frac{q_2}{3} - \frac43 \\
    & \geq &\frac{2n}{3} - \frac{2}{3} - \frac43 \\
    & = & \frac{2(3|p|)}{3} - 2 \\
    & \geq & 2|p| - 2
    \end{array}$$

    The third and final case is that there are three isolated nodes in $G'$. Given the second and third restrictions, we recognize that there cannot be a component of size two in $\mathcal{M}$. Therefore, the remaining components in $\mathcal{M}$ must be of size $s \geq 3$ containing at least $2s/3$ edges. Therefore, the number of measurements in $\mathcal{M}$ is at least
    $$\begin{array}{rcl}
    \frac{2(n-3)}{3} & = &\frac{2n}{3} - 2 \\
    & = & \frac{2(3|p|)}{3} - 2 \\
    & \geq & 2|p| - 2
    \end{array}$$
    In all three cases, we end up with a lower bound of $2|p| - 2$ as claimed.\end{proof}

The two provided claims give the lower bounds in the first and last rows of Table~\ref{tab:bounds_tripart}. The cases where two of the partitions are the same size and the third one is different (smaller or larger) can be handled in the same way, yielding the lower bounds in the second and third rows of Table~\ref{tab:bounds_tripart}.

\subsection{Complete $k$-partite Graphs}

We prove Theorem \ref{thm:k-partite} in this section. Lemma \ref{incdient_edge_lemma_for_kpartite} in Appendix \ref{A:edge_partition_k-partite} will provide the edge equivalence classes for a given measurement. The proof will utilize altered resistance values listed in Tables \ref{table:k_part-diff-part} and \ref{table:k_part-same-part}. As mentioned above in Section~\ref{sec:bipartite_graphs}, we use the convention in these two tables that for any measurement $(r,s)$, we use $p_r$ and $p_s$ to denote the partitions containing $r$ and $s$, respectively. We prove the stated lower bound below, and then establish the stated upper bound.

\subsubsection{Lower Bound in Theorem \ref{thm:k-partite}} \label{LB:thm:k-partite}

Let $\mathcal{M}$ be any set of measurements that solves the faulty edge detection problem for the complete $k$-partite graph with $k\geq 2$ and consider the measurement graph $G' = (V,\mathcal{M})$ from Definition \ref{def:measurement-graph}. We first observe that we can have at most one isolated vertex in each partition in $G'$. Otherwise, let $a,a'$ be two isolated vertices in the same partition $p_a$ and let $b \in p_b$ be any vertex in $G$ with $p_a \neq p_b$. Consider any measurement $(r,s)$. Since $a, a'$ are in the same partition and they are not the endpoint of any measurement, for both edges $(a,b)$ and $(a',b)$, the effective resistance value will come from the same column (one of \textbf{III, IV, VI, VII, VIII, or IX} from Table~\ref{table:k_part-diff-part} if $r,s$ are in different partitions, or columns \textbf{X, XI, XII} from Table~\ref{table:k_part-same-part} if $r,s$ are in the same partition) when they are altered.

Thus, we have at least $n-k$ vertices with degree at least 1 in $G'$ and so the sum of the degrees is at least $n-k$. By the Handshaking lemma, we must have at least $\frac{n-k}{2}$ edges, and since the number of edges is an integer, it must be at least $\left\lceil\frac{n-k}{2}\right\rceil$.

\subsubsection{Upper Bound in Theorem \ref{thm:k-partite}} \label{UB:thm:k-partite}

\paragraph{The measurement strategy}\label{strategy:k-partite} 

We describe a measurement strategy for any tripartite graph. This will form the building block for the general $k$-partite case. Below, we employ notation like $u \in p_u$ because we feel this is intuitive; however, the symbol/variable $u$ is being used to denote both a vertex and a natural number, i.e., the index of the partition. We believe the intuitive clarity justifies this slight abuse of notation.

\begin{definition}\label{def:tri_graph_strategy}[Tripartite Graph Strategy] Let $T$ be a tripartite graph with partitions $p_\alpha,p_\beta,p_\gamma$ such that $|p_\alpha| \leq |p_\beta| \leq |p_\gamma|$. Maintain one designated node in each partition, i.e., $v_\alpha \in p_\alpha,v_\beta \in p_\beta,v_\gamma \in p_\gamma$. Next, place as many disjoint tripartite wing measurements (Definition \ref{def:tri_bw}) as possible that do not involve any of the designated nodes. If there are nodes left in $p_\beta \setminus\{v_\beta\}$ that are not used in a measurement, place as many disjoint zig-zagging bipartite wing measurement schemes (Definition \ref{def:zigzag_bip_bw}) as possible, between $p_\beta$ and $p_\gamma$ using the remaining nodes, but without involving the designated nodes $v_\beta, v_\gamma$. If one node $v \in p_\beta$ remains that is unable to be placed in a zig-zagging butterfly wing measurement scheme, place a hairpin butterfly wing measurement (Definition \ref{def:hairpin_bip_bw}) between $p_\beta$ and $p_\gamma$ with $v\in p_\beta$ as the center node and without involving any of the previously used nodes. Note that if $|p_\beta| = |p_\gamma|$, then one of the wing nodes of this hairpin butterfly wing measurement will be $v_\gamma$. If there are two nodes $v_1,v_2  \in p_\beta\setminus\{v_\beta\}$ that have not been used in any measurements so far, place a hairpin butterfly wing measurement between $p_\beta$ and $p_\gamma$ with $v_1,v_2$ as its wing nodes, and any unused node in $p_\gamma$ as the center node. If there are nodes left in $p_\gamma \setminus\{v_\gamma\}$ that have not been used in any measurements so far, place as many disjoint partition butterfly wing measurements (Definition \ref{def:part_bw}) as possible within $p_\gamma \setminus\{v_\gamma\}$. If one node $u_\gamma$ remains in $p_\gamma \setminus\{v_\gamma\}$ that has not been used in any measurements so far, make measurement $(u_\gamma,w_\gamma)$ where $w_\gamma$ is any previously used center node in $p_\gamma$. If two nodes $u_\gamma, w_\gamma$ remain that have not been used in any measurements so far, place a partition butterfly wing measurement involving $u_\gamma, w_\gamma, v_\gamma$, with $w_\gamma$ as the center node.
\end{definition}

We now describe the strategy for the general $k$-partite graph with partitions $p_1, \ldots, p_k$ satisfying $2 \leq |p_1| \leq \ldots \leq |p_k|$. If $k\equiv 1 \mod{3}$, let $i \in \{1, \ldots, k\}$ be the index that achieves the minimum in $\min_{i\in\{1,\ldots, k\}}\left\{\left\lceil\frac{2(|p_i| - 1)}{3}\right\rceil + val(L\setminus\{i\})\right\}$, and let the partition $p_i$ be called $P$. If $k\equiv 2 \mod{3} $, let $i,j \in \{1, \ldots, k\}$ be the indices that achieve the minimum in $$\min_{i, j\in \{1,\ldots, k\},\; i\neq j}\left\{\left\lceil\frac{2(|p_i| + |p_j| - 2)}{3}\right\rceil + val(L\setminus\{i,j\})\right\}.$$ In this case, the partitions $p_i$ and $p_j$ form a bipartite subgraph that will be called $B$ in the following. In both cases, we group the remaining partitions, i.e., indexed by $\{1, \ldots, k\}\setminus \{i\}$ in the first case and $\{1, \ldots, k\}\setminus \{i,j\}$ in the second case, into tripartite subgraphs in order of the sizes of the partitions. We refer to these tripartite subgraphs as $T_1, \ldots, T_{\lfloor k/3\rfloor}$.

We next implement the following steps. 
\begin{enumerate}
    \item The tripartite graph strategy (Definition \ref{def:tri_graph_strategy}) is applied to $T_i$ for all $i = 1, \ldots, \lfloor k/3 \rfloor$. 
    \item If $k\equiv 1 \mod{3}$, place as many partition butterfly wing measurements (Definition \ref{def:part_bw}) within this partition as possible. If any vertices remain in $P$ that were not used in any of these partition butterfly measurements, place a measurement from each of these vertices to $w$, where $w$ is any center node of a partition butterfly measurement in $P$.
    \item If $k\equiv 2 \mod{3}$, let $p_i, p_j$ denote the partitions used to form the bipartite subgraph $B$ such that $|p_i| \leq |p_j|$. Maintain one designated node $u \in p_i$. Place as many disjoint zig-zagging bipartite wing measurement schemes (Definition \ref{def:zigzag_bip_bw}) as possible, between $p_i$ and $p_j$ without involving the designated node $u$. If one node $v \in p_i\setminus\{u\}$ remains that is unable to be placed in a zig-zagging butterfly wing measurement scheme, place a hairpin butterfly wing measurement (Definition \ref{def:hairpin_bip_bw}) between $p_i$ and $p_j$ with $v\in p_i\setminus\{u\}$ as the center node, without involving any of the previously used nodes. If there are two nodes $v_1,v_2  \in p_i\setminus\{u\}$ that have not been used in any measurements so far, place a hairpin butterfly wing measurement between $p_i$ and $p_j$ with $v_1,v_2$ as its wing nodes, and any unused node in $p_j$ as the center node. If there are nodes left in $p_j$ that have not been used in any measurements so far, place as many disjoint partition butterfly wing measurements (Definition \ref{def:part_bw}) as possible within $p_j$. If one node $v$ remains in $p_j$ that has not been used in any measurements so far, make measurement $(v,w)$ where $w$ is any previously used center node in $p_j$. If two nodes $v_1, v_2$ remain that have not been used in any measurements so far, place a partition butterfly wing measurement involving $v_1, w, v_2$, where $w$ is any previously-used center node in $p_j$.
\end{enumerate}

We claim that the above measurement strategy suffices.

\paragraph{Measurement Counts}\label{measurement_counts}
We first count the number of measurements used in the tripartite graph measurement strategy (Definition \ref{def:tri_graph_strategy}). One has to consider nine different cases, depending on the relationships between the partition sizes $|p_\alpha|, |p_\beta|$ and $|p_\gamma|$. It can be verified that these counts are given by the values enumerated in Table \ref{tab:num_of_meas_kpart}. Since $|p_\alpha| \leq |p_\beta| \leq |p_\gamma|$ in all cases and the number of measurements must be integer, we see from the values in Table \ref{tab:num_of_meas_kpart} that we have a general upper bound of $2|p_\gamma|-2$.

For the general $k$-partite case, we observe that if $k \equiv 0 \mod{3}$, then we simply have the bounds from the $k/3$ tripartite subgraphs, giving us an upper bound of $\sum_{i \in [k]: i \mod{3} = 0} (2|p_i| -2)$. If $k \equiv 1 \mod{3}$, then once we select the isolated partition $P$ as per our strategy, we can apply the tripartite upper bound of $2|p_\gamma| - 2$ for each of the tripartite subgraphs $T_i$, $i=1, \ldots, \lfloor k/3 \rfloor$. By definition of $val(L)$ for an ordered list $L$ of numbers (Definition~\ref{def:list}) and the choice of $P$, we have the stated bound. Similarly, in the case where $k \equiv 2 \mod{3}$, our choice of the bipartite subgraph $B$ and the definition of $val(L)$ gives the stated bound.

\renewcommand{\arraystretch}{1.5}
\begin{table}
    \centering
    \begin{tabular}{|p{3.8cm}|p{3.9cm}|p{4.7cm}|}
    \hline
    \multicolumn{2}{|c|}{\textbf{Cases of Partition Sizes}} & \multicolumn{1}{|c|}{\textbf{Number of Measurements}} \\
    \hline
    \hline
    \multirow{3}{3.3cm}{$|p_\beta| - |p_\alpha| \equiv 0 \mod{3}$} & $|p_\gamma| - |p_\beta| \equiv 0 \mod{3}$ & $\frac{2|p_\alpha|}{3} + \frac{2|p_\beta|}{3} + \frac{2|p_\gamma|}{3} - 2$ \\ 
    \cline{2-3}
    & $|p_\gamma| - |p_\beta| \equiv 1 \mod{3}$ & $\frac{2|p_\alpha|}{3} + \frac{2|p_\beta|}{3} + \frac{2|p_\gamma|}{3} - \frac{5}{3}$ \\ 
    \cline{2-3}
    & $|p_\gamma| - |p_\beta| \equiv 2 \mod{3}$ & $\frac{2|p_\alpha|}{3} + \frac{2|p_\beta|}{3} + \frac{2|p_\gamma|}{3} - \frac{4}{3}$ \\ 
    \hline
    \multirow{3}{3.3cm}{$|p_\beta| - |p_\alpha| \equiv 1 \mod{3}$} & $|p_\gamma| - |p_\beta| \equiv 1 \mod{3}$ & $\frac{2|p_\alpha|}{3} + \frac{2|p_\beta|}{3} + \frac{2|p_\gamma|}{3} - 2$ \\ 
    \cline{2-3}
    & $|p_\gamma| - |p_\beta| \equiv 2 \mod{3}$ & $\frac{2|p_\alpha|}{3} + \frac{2|p_\beta|}{3} + \frac{2|p_\gamma|}{3} - \frac{5}{3}$ \\ 
    \cline{2-3}
    & $|p_\gamma| - |p_\beta| \equiv 0 \mod{3}$ & $\frac{2|p_\alpha|}{3} + \frac{2|p_\beta|}{3} + \frac{2|p_\gamma|}{3} - \frac{4}{3}$ \\ 
    \hline
    \multirow{3}{3.3cm}{$|p_\beta| - |p_\alpha| \equiv 2 \mod{3}$} & $|p_\gamma| - |p_\beta| \equiv 2 \mod{3}$ & $\frac{2|p_\alpha|}{3} + \frac{2|p_\beta|}{3} + \frac{2|p_\gamma|}{3} - 2$ \\ 
    \cline{2-3}
    & $|p_\gamma| - |p_\beta| \equiv 0 \mod{3}$ & $\frac{2|p_\alpha|}{3} + \frac{2|p_\beta|}{3} + \frac{2|p_\gamma|}{3} - \frac{5}{3}$ \\ 
    \cline{2-3}
    & $|p_\gamma| - |p_\beta| \equiv 1 \mod{3}$ & $\frac{2|p_\alpha|}{3} + \frac{2|p_\beta|}{3} + \frac{2|p_\gamma|}{3} - \frac{4}{3}$ \\ 
    \hline
    \end{tabular}
    \caption{Number of Measurements required for various cases of partition sizes in k-partite graphs.}
    \label{tab:num_of_meas_kpart}
\end{table}

\paragraph{Correctness}
Consider any two edges $e,e' \in E(G)$ with $e = (a,b)$ and $e' = (a',b')$, where $a \in p_a, b \in p_b, a' \in p_{a'}, b' \in p_{b'} $. We must show that there exists a measurement which gives different effective resistance values when these two edges are altered. Note that this notation implies, for example, that $s \in p_b$ if and only if $b \in p_s$. We will use such facts without explicit mention below. In the case analysis, when we consider tripartite subgraphs $T_i, T_j, T_k$, we allow for the possibility that some subset of the indices $\{i,j,k\}$ (possibly all) are all equal to each other unless some pairs are explicitly assumed to be unequal in the case under consideration.
\medskip

\noindent\underline{\em Case 1: $e$ has both endpoints in $T_i$ and $e'$ has both endpoints in $T_j$ such that $i \neq j$}

We observe that there exists a measurement $(r,s)$ within a tripartite butterfly wing measurement in $T_i$ such that $a \in p_r$. Using this measurement, the effective resistance when $e'$ is altered has value corresponding to column \textbf{IX} in Table \ref{table:k_part-diff-part}, and for $e$, the new effective resistance will have value from any of the columns from \textbf{I} to \textbf{VI} since $a \in p_r$. This implies that these two edges will give differing values of effective resistance for measurement $(r, s)$ when altered.

\noindent\underline{\em Case 2: $e$ and $e'$ have both endpoints in $T_i$ for some $i$}

Since there are only three partitions in $T_i$, one of these partitions must contain an endpoint of both edges. Without loss of generality, let $a,a'$ be in the same partition $p_a$ in $T_i$. 

\underline{\em Case 2a: Suppose $b$ and $b'$ are in different partitions}. In this case, any measurement $(r,s)$ within a tripartite butterfly wing measurement in $T_i$ such that $r \in p_a$ and $s \in p_b \cup p_{b'}$ will distinguish $e$ and $e'$. Since $b, b'$ are in different partitions, we may assume (up to a relabeling of the edges) that $s \in p_b$ or equivalently, $b \in p_s$. If $a = r$ and $b =s$, when $e$ is altered, the effective resistance value comes from column \textbf{I} and for $e'$, the value comes from either column \textbf{V} if $a' = a$ or column \textbf{VI} if $a' \neq a$. Similarly, if $a = r$ and $b \in p_s, b \neq s$, for $e$, the value comes from column \textbf{II}, and for $e'$, the value comes from either column \textbf{V} if $a' = a$ or column \textbf{VI} if $a' \neq a$. If $a \neq r$ and $b = s$, when $e$ is altered, the effective resistance value comes from column \textbf{III}, and for $e'$, the value comes from column \textbf{V} or column \textbf{VI} depending on whether or not $a' = r$. If $a \neq r$ and $b \in p_s, b \neq s$, for $e$ the value comes from column \textbf{IV}, and for $e'$ value comes from column \textbf{V} or column \textbf{VI} depending on whether or not $a' = r$.

 \underline{\em Case 2b: Suppose $b$ and $b'$ are in the same partition}. We now go through the following possibilities.

Suppose $a = a'$. Either $b$ or $b'$ must be involved in a measurement since there is at most one designated node in the partition containing $b, b'$. Without loss of generality, let us assume that $b$ is one of the endpoints of the measurement $(r,s)$ with $b=s$. If $r = a = a'$, then the value when $e$ is altered comes from column \textbf{I} in Table~\ref{table:k_part-diff-part}, and for $e'$, the value comes from column \textbf{II}. If $r \in p_a$ but $r \neq a$, then for $e$, the value comes from \textbf{III} and for $e'$, it is column \textbf{IV}. If $r$ is in the partition that is different from $p_a$ and $p_b$, then for $e$, the value comes from column \textbf{VII} and for $e'$, we use column \textbf{VIII}. Finally, if $r \in p_b$, then for $e$, the value comes from column \textbf{X} in Table~\ref{table:k_part-same-part}, and for $e'$, the value comes from \textbf{XI} (Note that to read the values in Table~\ref{table:k_part-same-part}, we have to switch the labels of the endpoints of the two edges). 

Suppose $b = b'$ (and therefore, $a\neq a'$). Observe that up to relabeling of the labels on the endpoints, this case is the same as the previous case with $a=a'$.

We now consider the situation where $a \neq a'$ and $b \neq b'$. Either $a$ or $a'$ must be involved in a measurement since there is at most one designated node in the partition $p_a$. Without loss of generality, let us assume that $a$ is one of the endpoints of the measurement $(r,s)$ with $a = r$. If $s$ is in the partition that does not contain $a$ or $b$, then if $e$ is altered, the effective resistance value comes from column \textbf{V}, and for $e'$, the value comes from column \textbf{VI}. If $s \in p_b$ with $s \neq b'$, then if $e$ is altered, the effective resistance value comes from either column \textbf{I} or \textbf{II} depending on whether $s = b$ or not, and for $e'$, the value comes from column \textbf{IV}. If $s \in p_b$ with $s = b'$, then there exists a different measurement $(\bar r, \bar s)$ in the strategy incident on either $a$ or $b'$ (since there are no components of size two in the measurement graph for the outlined measurement strategy). Again, up to switching the labels on the endpoints of the edges, we may assume $\bar r = a$. Now we must fall into one of the previously considered cases and we are done. Finally, consider the case where $s \in p_a$. If $s \neq a'$, then for $e$, the value comes from column \textbf{X} in Table~\ref{table:k_part-same-part} and for $e'$, it comes from column \textbf{XI}. If $s = a'$, then there exists a different measurement $(\bar r, \bar s)$ in the strategy incident on either $a$ or $a'$ (since there are no components of size two in the measurement graph for the outlined measurement strategy). In fact, both $\bar r$ and $\bar s$ must be in $p_a$ because $r,s$ must be involved in a partition butterfly wing measurement within $p_a$. We are again able to distinguish edges $e, e'$ using columns \textbf{X} and \textbf{XI}, respectively, in Table~\ref{table:k_part-same-part}.

\noindent\underline{\em Case 3: $e$ has both endpoints in $T_i$ and $e'$ has one endpoint in $T_j$ and another in $T_k$ such that $j \neq k$}
%

Without loss of generality, assume $k \neq i$ and $b' \in p_{b'} \subset T_k$. There exists a measurement $(r,s)$ within a tripartite butterfly wing measurement in $T_k$ such that $b' \in p_{s}$ that will distinguish $e$ and $e'$. This is because the effective resistance value when $e$ is altered comes from column \textbf{IX} in Table \ref{table:k_part-diff-part} since $a,b \notin \{p_r,p_s\}$, and for $e'$ the value comes from columns \textbf{VII} or \textbf{VIII} depending on whether or not $b'=s$. This implies that the edges will have different values for the effective resistance when altered.
\smallskip

\noindent\underline{\em Case 4: $e$ has one endpoint in $T_{i_1}$ and another in $T_{i_2}$ $(i_1 < i_2)$ and $e'$ has one endpoint in $T_{j_1}$ and}\\
\noindent\underline{\em another in $T_{j_2}$ $(j_1 < j_2)$}

First, we examine the case in which either $i_1 \neq j_1$ or $i_2 \neq j_2$. We consider the case $i_2 \neq j_2$; the other case has a symmetrical argument.
There exists a measurement $(r,s)$ within a tripartite butterfly wing measurement in $T_{j_2}$ such that $b' \in p_s \subset T_{j_2}$. Using this measurement, we will be able to distinguish $e$ and $e'$ as they will have different values for effective resistance; for $e$, the value comes from column \textbf{IX} in Table \ref{table:k_part-diff-part} since $a,b \notin \{ p_r,p_s\}$ and for $e'$, the value comes from \textbf{VII} or \textbf{VIII} depending on whether or not $b' = s$.

If $i_1 = j_1$ and $i_2 = j_2$, we first consider the case where either $a' \not\in p_a$ or $b' \not\in p_b$. In the first case, we can use a tripartite measurement $(r,s)$ such that $r \in p_a \cup p_{a'}$ and $s \in T_{i_1}/\{p_a,p_{a'}\}$ to distinguish the edges. Indeed, we may assume (up to changing the labels on the edges) that $a \in p_r$. Therefore, when $e$ is altered, the effective resistance value comes from column \textbf{V} or \textbf{VI} depending on whether or not $a = r$, and for $e'$, the value comes from column \textbf{IX} as $a',b' \notin \{p_r,p_s\}$. The case where $b' \not\in p_b$ can be argued in the same way after switching the labels on the endpoints of both edges.
 
So, we now consider the situation where $a'\in p_a$ and $b' \in p_b$. Since $e \neq e'$, either $a \neq a'$ or $b\neq b'$. Up to a relabeling of the endpoints, we may assume $b \neq b'$. Since $b,b' \in p_b$, one of them must be involved in a measurement $r,s$ since there is at most one designated vertex in every partition that is not involved in any measurement. Further, there must exist a measurement that involves $b$ or $b'$ but not both since there are no components of size two in the measurement graph associated with the outlined measurement strategy.

Without loss of generality, we assume that $b = s$. Regardless of whether or not $a = a'$, we are able to say the following about the effective resistance values when $e$ and $e'$ are altered. If $r$ is not in the same partition $p_b$ as $s$, for $e$, the value comes from column \textbf{VII} in Table \ref{table:k_part-diff-part} since $a \notin p_r$, and for $e'$, the value comes from column \textbf{VIII} as $b \in p_s, b\neq s$ and $a' \notin p_r$. 
If $r$ is in the same partition $p_b$ as $s$, for $e$, the value comes from column \textbf{X} in Table \ref{table:k_part-same-part}, and for $e'$, the value comes from column \textbf{XI} as $a' \notin p_r$ and $b' \in p_r$ (Note that one has to switch the labels on the endpoints of the edges to read the values from Table~\ref{table:k_part-same-part}).
\smallskip

\noindent\underline{\em Case 5: $e$ has one endpoint in P and another in $T_i$ and $e'$ has one endpoint in $T_j$ and another in $T_k$}


For edge $e$, let $a \in P$ and $b \in T_i$. We use a partition butterfly wing measurement $(r,s)$ within $P$ such that $a = r$. This means that when $e$ is altered, the effective resistance value comes from column \textbf{X}, and for $e'$, the value comes from column \textbf{XII} as $a, b \notin p_r$.
\smallskip

\noindent\underline{\em Case 6: $e$ has one endpoint in $P$ and another in $T_i$ and $e'$ has one endpoint in $P$ and another in $T_j$}}


Let $a,a' \in P$, $b \in T_i$, and $b' \in T_j$. If $a \neq a'$, then there exists a partition butterfly wing measurement $(r,s)$ that uses exactly one of the vertices $a$ and $a'$, and we may assume without loss of generality that $a=r$. If $e$ is altered, the effective resistance value will come from column \textbf{X}, and for $e'$, the value comes from column \textbf{XI}.

If $a = a'$, we use a tripartite butterfly wing measurement $(r,s)$ in $T_i$ such that $s \in p_b \cup p_{b'}$. Without loss of generality, let $s \in p_b$. If $b' \notin p_b$, then the effective resistance value when $e$ is altered comes from columns \textbf{VII} or \textbf{VIII} depending on whether or not $b = s$, and for $e'$, the value comes from column \textbf{IX} as $a',b' \notin \{ p_r,p_s \}$. If $b' \in p_b$, then there exists a measurement involving $b$ or $b'$ but not both as we cannot have more than one designated node within a partition and there cannot be components of size two in the measurement graph for the outlined measurement strategy. Again, without loss of generality, let $b = s$. If $s \notin p_r$, then the effective resistance value when $e$ is altered comes from column \textbf{VII} from Table \ref{table:k_part-diff-part} as $a \notin p_r$, and for $e'$, the value comes from column \textbf{VIII} as $a' \notin p_r$ and $b' \in p_s, b' \neq s$. If $s \in p_r$, then for $e$, the value comes from column \textbf{X} in Table \ref{table:k_part-same-part}, and for $e'$ the value comes from column \textbf{XI} as $a \notin p_r$ and $b' \in p_r, b' \neq s$ (One has to switch the labels on the endpoints of the edges to read off the values from Table~\ref{table:k_part-same-part}).
\smallskip

\noindent\underline{\em Case 7: $e$ has both endpoints in $B$ and $e'$ has one endpoint in $T_i$ and another in $T_j$} 

Without loss of generality, assume $b' \in T_j$. Regardless of whether $i=j$ or not, we use a measurement $(r,s)$ within a tripartite butterfly wing measurement in $T_j$ such that $b' \in p_s$ (and thus $r \in T_j \setminus p_{b'}$). This measurement allows us to say that when $e$ is altered, the effective resistance value comes from column \textbf{IX} in Table~\ref{table:k_part-diff-part} as $a,b \notin \{p_r,p_s\}$ and for $e'$, the value comes from columns \textbf{I--VIII} depending on the which partitions $a',b' , r$ and $s$ fall into within $T_i$ and $T_j$ (with the possibility that $i=j$).
\smallskip

\noindent\underline{\em Case 8: $e$ has both endpoints in $B$ and $e'$ has one endpoint in $B$ and another in $T_i$}

Without loss of generality, let $b' \in p_{b'} \subset T_i$. We use a measurement $(r,s)$ within a tripartite butterfly wing measurement in $T_{i}$ such that $b' \in p_s$ and $r$ is in another partition in $T_{i}$. This means that when $e$ is altered, the effective resistance value comes from column \textbf{IX} in Table \ref{table:k_part-diff-part} as $a,b \notin \{p_r,p_s\}$, and for $e'$, the value comes from column \textbf{VII} if $b' = s$ or column \textbf{VIII} if $b' \neq s$.
\smallskip

\noindent\underline{\em Case 9: $e$ and $e'$ have both endpoints in $B$}

Let $a, a' \in p_a$ and $b,b' \in p_b$. Since $e \neq e'$, either $a \neq a'$ or $b\neq b'$. Up to a relabeling of the endpoints, we may assume that $a \neq a'$. Recall that $p_a$ contains at most one designated node not involved in any measurement. Further, there are no components of size two in the measurement graph associated with the outlined measurement strategy. Therefore, there must exist a measurement $(r,s)$ that involves $a$ or $a'$ but not both. Without loss of generality, let $a$ be this vertex with $a = r$.

Suppose first that $b = b'$. If $s \notin p_r$ and if $b = s$, when $e$ is altered, the effective resistance value comes from column \textbf{I} in Table~\ref{table:k_part-diff-part} since $a=r$, and for $e'$, the value comes from column \textbf{III} since $a' \in p_r, a' \neq r$. If $s \notin p_r$ and $b \in p_s$ with $b \neq s$, when $e$ is altered, the value comes from column \textbf{II} since $a =r$, and for $e'$, the value comes from column \textbf{IV} since $a' \in p_r, a' \neq r$. If instead $s \in p_r$, the effective resistance value when $e$ is altered comes from column \textbf{X} in Table~\ref{table:k_part-same-part} as $a=r$, and for $e'$, the value comes from column \textbf{XI} as $a' \in p_r, a' \neq r$ by the choice of $(r,s)$.

Suppose now that $b  \neq b'$. If $s \in p_r$, then $(r,s)$ is a partition butterfly wing measurement, in which case for $e$, the effective resistance value comes from column \textbf{X} in Table~\ref{table:k_part-same-part}, and for $e'$, the value comes from column \textbf{XI} as $a' \in p_r, a' \neq r$ by the choice of $(r,s)$, and $b \notin p_r$. Now, suppose $s \notin p_r$. If $s \neq b'$, then the effective resistance value when $e$ is altered comes from column \textbf{I} or column \textbf{II} in Table~\ref{table:k_part-diff-part} depending on whether $b = s$ or not, and for $e'$, the value comes from column \textbf{IV} as $a' \in p_r, a' \neq r$ and $b' \in p_s, b'\neq s$. Finally, consider the situation that $s = b'$. Since there are no components of size two in the measurement graph associated with the outlined measurement strategy, there must exist another measurement $(\bar r, \bar s)$ with either $\bar r = a$ or $\bar s = b'$, but not both. Then, up to relabeling the endpoints of the edges, we can apply one of the previous arguments within this paragraph.
\smallskip

\noindent\underline{\em Case 10: $e$ has one endpoint in $B$ and another in $T_i$ and $e'$ has one endpoint in $T_j$ and another in $T_k$}

Let $a \in p_a \subseteq B$. We can distinguish the edges by using any measurement $(r,s)$ that is a measurement from a zig-zagging butterfly wing scheme or a hairpin butterfly wing measurement and that uses a vertex in $p_a$. As $b \notin p_s$, when $e$ is altered, the effective resistance value comes from column \textbf{V} or column \textbf{VI} depending on whether $a = r$ or not, and for $e'$, the value comes from column \textbf{IX} since $a', b' \notin \{ p_r,p_s \}$.
\smallskip

\noindent\underline{\em Case 11: $e$ has one endpoint in $B$ and another in $T_i$ and $e'$ has one endpoint in $B$ and another in $T_j$}

Let $a \in p_a \subseteq B$, $a' \in p_{a'} \subseteq B$ and consider the following three subcases. 

If $a = a'$, the same argument as that of Case 7 when $a=a'$ can be used to show that a tripartite butterfly wing measurement $(r,s)$ in $T_i$ such that $s \in p_b \cup p_{b'}$ exists and will distinguish $e$ and $e'$.

Suppose now that $a' \in p_a$, but $a \neq a'$. Recall that there is at most one designated node in any partition that is not involved in a measurement in the strategy, and there are no components of size two in the measurement graph associated with the strategy. Thus, there must exist a measurement $(r,s)$ in $B$ that has $a$ or $a'$, but not both, as an endpoint; without loss of generality, let $a = r$. If $s \notin p_r$, this means that when $e$ is altered, the effective resistance value comes from column \textbf{V} as $a=r$ and $b \notin p_s$, and for $e'$, the value comes from column \textbf{VI} since $a' \in p_r, a' \neq r$ and $b' \notin p_s$. If $s \in p_r$, then for $e$, the value comes from column \textbf{X} as $b \notin \{p_r,p_s\}$, and for $e'$, the value comes from \textbf{XI} since $a' \in p_r, a' \neq r$ and $b' \notin \{p_r,p_s\}$.

Finally, if $a' \notin p_a$, using the same logic, there must exist a measurement that involves exactly one of $a$ or $a'$ as an endpoint; let $a$ be involved in measurement $(r,s)$ with $a = r$. If $s \notin p_r$, this means that $e$ will have value from column \textbf{V} as $a=r$ and $b \notin p_s$, and for $e'$ the value comes from column \textbf{VI} since $a' \in p_r, a' \neq r$ and $b' \notin p_s$ (after switching the labels on $r,s$). If $s \in p_r$, then for $e$, the value comes from column \textbf{X}, and for $e'$, the value comes from \textbf{XII} since $a', b' \notin p_r$.

\section{Conclusion and future directions}\label{sec:future avenues} In this paper, we introduce a new combinatorial optimization problem (Definition~\ref{def:electrical-network}) motivated by a fundamental question in fault diagnosis in electrical systems. We believe the problem is mathematically rich and the insights are widely applicable in diverse areas. There are two broad future research directions to pursue:
\smallskip

\noindent{\bf Structural aspects}. In this paper, we focus on giving tight lower and upper bounds on the smallest number of measurements in complete graphs and complete $k$-partite graphs. It would be great to extend this analysis to other important families of graphs, e.g. grid/lattice graphs (in general $d$-dimensions), wheels, polyhedral graphs for structured polyhedra such as hypercubes/fullerene graphs, and others. Beyond tight bounds on the smallest number of measurements, one can also give other insights on the structure of the optimal set of measurements for structured graph families, or relate the optimal number of measurements to other well-studied combinatorial properties such as edge covers, matchings etc., or spectral properties of the Laplacian or adjacency matrices. 
\medskip

\noindent{\bf Algorithmic aspects}. The design of efficient algorithms for solving the faulty edge detection problem has never been explored in the literature, to the best of our knowledge. For example, the following question is open:

\begin{quote}
    {\em For what class of graphs can the faulty edge detection problem be solved in polynomial time?}
\end{quote}
\medskip

It is not hard to show that the problem can be modeled as a set cover or an integer programming problem. The polyhedral combinatorics of this special set cover/integer programming problem is also worth studying, e.g., deriving classes of valid inequalities for the feasible solutions. This will have computational implications in terms of speeding up integer programming based algorithms for the faulty edge detection problem. 

\section*{Acknowledgments}
The authors gratefully acknowledge the support from the Air Force Office of Scientific Research (AFOSR) grant FA9550-25-1-0038. The authors are also very grateful to Prof. Daniel Naiman and Prof. Fadil Santosa at Johns Hopkins University for suggesting the fault detection problem in Definition~\ref{def:edge-detection} in the first place, and for very helpful discussions around the problem.


\bibliographystyle{plain}
\bibliography{full-bib}

\appendix
\section{The Laplacian and Effective Resistance}




We discuss matrix representations of a graph $G$ that are used in obtaining effective resistance values within the original graph $G$ and when an edge $(a,b)\in E$ is altered.

\begin{definition}
    The \textbf{adjacency matrix} of a weighted graph $G=(V,E)$ is a square matrix where each row and column is associated with a vertex in the graph and an entry corresponding to row indexed by $u \in V$ and column $v \in V$ is the weight $w_{(u,v)}$ if $(u,v)\in E$, and 0 otherwise.
%
%
    The \textbf{degree matrix} of a graph is a diagonal matrix where the rows and columns represent each vertex in the graph and the diagonal entries represent the sum of the weights of the edges incident at the vertex.
%
%
    The \textbf{Laplacian} of a graph is the square matrix defined by the difference of its degree matrix and its adjacency matrix. 
\end{definition}


Below, given a Laplacian matrix $L$, $L(v)$ denotes the Laplacian matrix with the $v$-th row and column removed.

\begin{theorem}\label{thm:eff-res-change} Let $(V,E, \w)$ be an electrical network. Let $L$ denote the Laplacian matrix of the corresponding weighted graph and let $v\in V$ be an arbitrary, fixed vertex. Then, the effective resistance $R_{rs}$ of the network between $r,s \in V$ (Definition~\ref{def:electrical-network}) is given by
\begin{equation}\label{eq:original-res}
    R_{rs} = \begin{cases}
        L(v)_{rr}^{-1} & \text{for $r \neq v, s = v$}\\
        L(v)_{ss}^{-1} & \text{for $r = v, s \neq v$}\\
        L(v)_{rr}^{-1} + L(v)_{ss}^{-1} - 2L(v)_{rs}^{-1} & \text{for $r,s \neq v$}\\
    \end{cases}
\end{equation}
where $L(v)$ represents the Laplacian matrix of the graph with the $v$-th row and column removed. Moreover, if the weight of the edge $e = (a,b)$ is altered by $-w_e \leq \alpha < +\infty$, i.e., the new weight is $w_e + \alpha$, then the new effective resistance values are given by 

 \begin{equation}\label{eq:new-res}
     R_{rs}' = \begin{cases}
        R_{rs} - \beta \left(L(b)_{ar}^{-1}\right)^2 & \text{for $r \neq b$, $s=b$}\\
        R_{rs} - \beta \left(L(b)_{as}^{-1}\right)^2 & \text{for $r = b$, $s \neq b$}\\
        R_{rs} - \beta \left(L(b)_{ar}^{-1} - L(b)_{as}^{-1}\right)^2 & \text{for $r, s \neq b$}
    \end{cases}
 \end{equation}
 where $\beta = \frac{\alpha}{1 + \alpha L(b)_{aa}^{-1}}.$
\end{theorem}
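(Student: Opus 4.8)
The plan is to derive both formulas directly from the first principles of electrical network theory, using Kirchhoff's and Ohm's laws for the first part and the Sherman--Morrison rank-one update formula for the second.

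For the unaltered network (equation~\eqref{eq:original-res}), I would begin from the physical definition: injecting a unit current at $r$ and extracting it at $s$, the vector of node potentials $\phi$ satisfies $L\phi = \e_r - \e_s$ by Kirchhoff's current law and Ohm's law, and $R_{rs} = \phi_r - \phi_s$. Since $L$ is singular (for a connected graph the all-ones vector spans its kernel), I would fix the gauge by grounding the reference vertex $v$, i.e. setting $\phi_v = 0$ and solving the reduced nonsingular system $L(v)\phi' = j'$, where $j'$ is the injection vector with the $v$-entry deleted. Then $\phi' = L(v)^{-1} j'$, and reading off the relevant entries of $L(v)^{-1}$ (using the symmetry of $L(v)^{-1}$ in the case $r,s\neq v$) yields the three cases in~\eqref{eq:original-res}.

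For the altered network (equation~\eqref{eq:new-res}), the key observation is that changing the weight of $e=(a,b)$ from $w_e$ to $w_e+\alpha$ perturbs the Laplacian by the rank-one matrix $\alpha(\e_a - \e_b)(\e_a - \e_b)^\top$. The crucial step is to ground the network precisely at the endpoint $b$: after deleting the $b$-th row and column, every entry of the perturbation involving index $b$ vanishes, so it collapses to the simple rank-one term $\alpha\,\e_a\e_a^\top$ in the reduced space, giving $L'(b) = L(b) + \alpha\,\e_a\e_a^\top$. Applying Sherman--Morrison then yields, for the $(i,j)$ entry of the perturbed inverse,
$$ \bigl(L'(b)^{-1}\bigr)_{ij} = L(b)_{ij}^{-1} - \frac{\alpha\, L(b)_{ia}^{-1} L(b)_{aj}^{-1}}{1 + \alpha\, L(b)_{aa}^{-1}} = L(b)_{ij}^{-1} - \beta\, L(b)_{ia}^{-1} L(b)_{aj}^{-1}, $$
with $\beta$ exactly as in the statement. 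Substituting these perturbed entries into~\eqref{eq:original-res} with reference vertex $v=b$ and subtracting $R_{rs}$ gives the three cases of~\eqref{eq:new-res}; in the case $r,s\neq b$ the three cross terms assemble into the perfect square $\bigl(L(b)_{ar}^{-1} - L(b)_{as}^{-1}\bigr)^2$, which is where the squared form in the statement originates.

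The main obstacle is not any single hard calculation but identifying the correct gauge: recognizing that grounding at the endpoint $b$ (rather than an arbitrary reference vertex) is exactly what turns the edge perturbation into a genuine rank-one update to which Sherman--Morrison applies in one line. A secondary technical point is to confirm the invertibility of $L(b)$ and the nonvanishing of the denominator $1+\alpha\,L(b)_{aa}^{-1}$; these hold as long as the network stays connected after the perturbation, which is automatic for $-w_e < \alpha$ and, in the boundary case $\alpha = -w_e$, requires only that deleting $e$ leaves the relevant vertices connected to $b$. Once the gauge is fixed, the remaining work is the routine substitution and simplification sketched above.
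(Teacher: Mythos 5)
Your proposal is correct and follows essentially the same route the paper takes, since the paper itself only cites the standard literature: formula \eqref{eq:original-res} is the classical grounded-Laplacian expression for effective resistance, and \eqref{eq:new-res} is obtained, exactly as you describe, by grounding at the endpoint $b$ so that the edge perturbation $\alpha(\e_a-\e_b)(\e_a-\e_b)^\top$ collapses to the rank-one update $\alpha\,\e_a\e_a^\top$ of $L(b)$, to which Sherman--Morrison applies. Your assembly of the cross terms into the perfect square $\left(L(b)_{ar}^{-1}-L(b)_{as}^{-1}\right)^2$ and your remark on the nonvanishing of the denominator $1+\alpha L(b)_{aa}^{-1}$ (equivalently, connectivity after the alteration) supply precisely the details the paper delegates to its reference.
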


The values in~\eqref{eq:original-res} are standard in electrical network theory and the values in~\eqref{eq:new-res} come about from using the Sherman-Morrison-Woodbury formula for computing the Laplacian inverse entries after updating the weight of the edge. For details, see Theorem 5.2 and equation (5.4) in~\cite{vos2016methods}.

\section{Effective resistance values for complete graphs}\label{incident_edge_lemma_complete}

The Laplacian $L$ of a complete graph $\mathbb{K}_n$ has entries $L_{ii} = n-1$ for $i=1, \ldots, n$ and $L_{ij} = -1$ for all $i \neq j$. For any vertex $v$, after eliminating the $v^{th}$ row and column from $L$, the inverse $L(v)^{-1}$ has entries $L(v)^{-1}_{ii} = \frac{2}{n}$ and $L(v)^{-1}_{ij} = \frac{1}{n}$ for all $i \neq j$.

\begin{lemma}\label{lem:incident_edge_lemma_complete}
    Given a measurement $(r,s)$ on $\mathbb{K_n}$ and an altered edge $(a,b)$, we have $R'_{rs} = R_{rs} - \Delta$, where $R_{rs}$ is the effective resistance with no alterations, $R'_{rs}$ is the new effective resistance between $r,s$ , and $\Delta$ is given by Table \ref{table:complete_graphs}, which considers both the cases where the resistance on $(a,b)$ goes to $0$ and to $+\infty$.
\end{lemma}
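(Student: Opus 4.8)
The plan is to specialize the general effective-resistance-change formula of Theorem~\ref{thm:eff-res-change} to the complete graph, where the reduced-Laplacian inverse has the explicit entries $L(v)^{-1}_{ii} = \frac{2}{n}$ and $L(v)^{-1}_{ij} = \frac{1}{n}$ recorded just above the lemma, and then to run a short case analysis on how the measurement pair $\{r,s\}$ overlaps the altered edge $\{a,b\}$. The first step is to evaluate the scalar $\beta = \frac{\alpha}{1 + \alpha L(b)_{aa}^{-1}}$ in the two regimes. Since $a \neq b$ we have $L(b)^{-1}_{aa} = \frac{2}{n}$, so removing the edge (new weight $0$, i.e.\ $\alpha = -1$ because the unaltered conductance is $1$) gives $\beta = \frac{-1}{1 - 2/n} = \frac{-n}{n-2}$, while shorting the edge ($\alpha \to +\infty$) gives $\beta = \lim_{\alpha\to\infty}\frac{\alpha}{1 + \alpha\cdot 2/n} = \frac{n}{2}$.

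Next I would read off the quadratic factor multiplying $\beta$ in each of the three branches of~\eqref{eq:new-res}. Writing $\Delta = R_{rs} - R'_{rs}$, the relevant quantity is $\left(L(b)^{-1}_{ar}\right)^2$, $\left(L(b)^{-1}_{as}\right)^2$, or $\left(L(b)^{-1}_{ar} - L(b)^{-1}_{as}\right)^2$, and each entry is $\frac{2}{n}$ or $\frac{1}{n}$ according to whether its two indices coincide. Organizing by the size of $\{r,s\}\cap\{a,b\}$: when $\{r,s\}=\{a,b\}$ the factor is $\left(\frac{2}{n}\right)^2 = \frac{4}{n^2}$; when exactly one vertex is shared the factor is $\left(\frac{1}{n}\right)^2 = \frac{1}{n^2}$; and when $\{r,s\}\cap\{a,b\}=\emptyset$ the two off-diagonal entries are equal so the factor is $0$. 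Multiplying by $\beta$ then yields all the table entries: for removal, $\Delta = -\frac{4}{n(n-2)}$, $-\frac{1}{n(n-2)}$, and $0$; for shorting, $\Delta = \frac{2}{n}$, $\frac{1}{2n}$, and $0$. These should match Table~\ref{table:complete_graphs}.

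There is no genuine obstacle here beyond careful bookkeeping: the formula in Theorem~\ref{thm:eff-res-change} singles out the deleted endpoint $b$ and the endpoint $a$, so one must check that the computed $\Delta$ is invariant under swapping the labels $a \leftrightarrow b$ and under swapping $r \leftrightarrow s$. This invariance is forced a priori by the symmetry of effective resistance in its two terminals and of an undirected edge in its two endpoints, and it is immediate from the computation, since every labeling collapses to one of the three overlap patterns above and $\beta$ depends only on $L(b)^{-1}_{aa} = \frac{2}{n}$. As a sanity check one can note that $\Delta = 0$ in the disjoint case has a clean physical explanation: under a unit voltage across $r,s$ in $\mathbb{K}_n$, every vertex outside $\{r,s\}$ sits at the common mean potential by symmetry, so no current traverses an edge joining two such vertices and its alteration is undetectable from the $(r,s)$ measurement; likewise, for the full-overlap shorting case $\Delta = \frac{2}{n} = R_{rs}$, consistent with the shorted pair having zero effective resistance.
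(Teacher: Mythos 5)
Your proposal is correct and takes essentially the same route as the paper's proof: both specialize Theorem~\ref{thm:eff-res-change} using the explicit entries $L(v)^{-1}_{ii}=\frac{2}{n}$ and $L(v)^{-1}_{ij}=\frac{1}{n}$, compute $\beta=\frac{n}{2}$ for shorting and $\beta=\frac{-n}{n-2}$ for removal, and substitute into the three branches of~\eqref{eq:new-res} case by case (including the difference $\left(L(b)^{-1}_{aa}-L(b)^{-1}_{as}\right)^2=\frac{1}{n^2}$ when $a=r$, $b\neq s$, which your overlap-size grouping handles correctly). Your explicit label-swap symmetry remark and the physical sanity checks are harmless additions to what is otherwise the paper's computation.
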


\begin{table}
\centering
\begin{tabular}{|p{1.8cm}|p{2.5cm}|p{2.5cm}|p{2.5cm}|p{2.5cm}|}
    \hline
    \multicolumn{5}{|c|}{Change $\Delta$ in effective resistance when edge $(a,b)$ is altered} \\
    \hline
     \centering & $a = r$ \newline $b \neq s$ & $a \neq r$ \newline $b = s$ & $a \neq r,s$ \newline $b \neq r,s$ & $a = r$ \newline $b = s$ \\
     \hline
     \centering $r_{ab} \rightarrow 0$ &  $\frac{1}{2n}$ & $\frac{1}{2n}$ & $0$ & $\frac{2}{n}$\\
     \hline
     \centering $r_{ab} \rightarrow +\infty$ & $\frac{-1}{n(n-2)}$ & $\frac{-1}{n(n-2)}$ & $0$ & $\frac{-4}{n(n-2)}$ \\
     \hline
\end{tabular}
\caption{Different cases for the effective resistance changes for complete graphs }\label{table:complete_graphs}
\end{table}

\begin{proof}
    First, we note that $R_{rs} = \frac{2}{n}$ by~\eqref{eq:original-res}. For $R'_{rs}$, the value of $\beta$ in Theorem~\ref{thm:eff-res-change} is different for when $r_{ab} \rightarrow 0$ (i.e., the edge is shorted and $w_{ab} \rightarrow +\infty$) and when $r_{ab} \rightarrow +\infty$ (i.e., the edge is removed and $w_{ab} \rightarrow 0$). Therefore, we make note of what these two values will be for our substitutions. When the resistance value $r_{ab}$ of the altered edge approaches $0$, the change $\alpha$ in weight approaches $+\infty$ and so $\beta = \frac{1}{L(b)_{aa}^{-1}} = \frac{1}{\frac{2}{n}} = \frac{n}{2}$. When the resistance value $r_{ab}$ approaches $+\infty$, the change in weight is $-1$ and so $\beta = \frac{-1}{1-L(b)_{aa}^{-1}} = \frac{-1}{1-\frac{2}{n}} = \frac{-n}{n-2}$.
    
    When $a \neq r$ and $b = s$, the change in resistance is $\Delta = \beta \left( L(b)^{-1}_{ar} \right)^2$ from ~\eqref{eq:new-res}. Substituting our given values, we obtain $\Delta = \left( \frac{n}{2} \right) \left( \frac{1}{n} \right)^2 = \frac{1}{2n}$ when the resistance $r_{ab}$ goes to $0$ and
    $\Delta = \left( \frac{-n}{n-2} \right) \left( \frac{1}{n}\right)^2 = \frac{-1}{n(n-2)}$ when the resistance $r_{ab}$ goes to $+\infty$. The situation $a = r$ and $b \neq s$ is symmetric to when $a \neq r$ and $b = s$ up to a permutation of vertex labels and so the changes must be the same as that case. Alternatively, one can use~\eqref{eq:new-res} again with $\Delta = \beta \left( L(b)^{-1}_{aa} -  L(b)^{-1}_{as} \right)^2$ since $a=r$ which gives the same values of $\frac{1}{2n}$ and $\frac{-1}{n(n-2)}$.

    When $a \neq r,s$ and $b \neq r,s$, the change in resistance is $\Delta = \beta \left( L(b)^{-1}_{ar} -  L(b)^{-1}_{as} \right)^2$ from ~\eqref{eq:new-res}. Therefore, we obtain
    $\Delta = \left( \frac{n}{2} \right) \left( \frac{1}{n} - \frac{1}{n} \right)^2 = 0$ when the resistance $r_{ab}$ goes to $0$ and 
    $\Delta = \left( \frac{-n}{n-2} \right) \left( \frac{1}{n} - \frac{1}{n} \right)^2 = 0$ when the resistance goes to $+\infty$.

    When $a = r$ and $b = s$, the change in resistance is $\Delta = \beta \left(L(b)^{-1}_{ar} \right)^2 = \beta \left(L(b)^{-1}_{aa} \right)^2$ from ~\eqref{eq:new-res}. Therefore, we obtain the following $\Delta = \left( \frac{n}{2} \right) \left( \frac{2}{n} \right)^2 = \frac{2}{n}$ when the resistance $r_{ab}$ goes to $0$ and
    $\Delta = \left( \frac{-n}{n-2} \right) \left( \frac{2}{n}\right)^2 = \frac{-4}{n(n-2)}$ when the resistance $r_{ab}$ goes to $+\infty$.\end{proof}

\section{Effective resistance values for k-partite graphs}\label{A:edge_partition_k-partite}

Let $P = \{p_1, ... p_k\}$ represent the set of $k$ partitions of a complete $k$-partite graph $G$ and let $n = \sum_{i=1}^k |p_i|$. The Laplacian matrix of $G$ is a block matrix with blocks of  size $|p_i|$ by $|p_j|$. For any $v\in V$,  $L(v)$ has the following block representation. We use $\mathds{1}_{d\times d}$ and $\mathds{I}_{d\times d}$ to denote the all ones and the identity matrices of size $d\times d$, respectively. We introduce the notation $\widetilde{p_i} = p_i -1$ for this purpose (making the formulas compact when the $v^{th}$ row and column has been removed). For ease of exposition, we give the formulas when $v\in p_1$; when $v$ is in the other partitions, the formulas are modified accordingly.

$$\begin{bmatrix}
    (n - |p_1|)\mathds{I}_{|\widetilde{p_1}| x |\widetilde{p_1}|} & -\mathds{1}_{|\widetilde{p_1}| x |p_2|} & ... & -\mathds{1}_{|\widetilde{p_1}| x |p_k|}\\
     -\mathds{1}_{|p_2| x |\widetilde{p_1}|} & (n-|p_2|)\mathds{I}_{|p_2| x |p_2|} & \ddots & \vdots \\
     \vdots & \ddots & \ddots & -\mathds{1}_{|p_{k-1}| x |p_{k}|} \\
     -\mathds{1}_{|p_k| x |\widetilde{p_1}|} & \dots & -\mathds{1}_{|p_{k}| x |p_{k-1}|} & (n-|p_k|)\mathds{I}_{|p_k| x |p_k|}
\end{bmatrix}$$

The inverse of $L(v)$ is given by the following block representation multiplied by the factor $\frac{1}{n-|p_1|}$. This can be checked by direct matrix multiplication calculations. \newline


{\small $$
\begin{bmatrix}
    \mathds{1}_{|\widetilde{p_1}| x |\widetilde{p_1}|} + \mathds{I}_{|\widetilde{p_1}| x |\widetilde{p_1}|} & \mathds{1}_{|\widetilde{p_1}| x |p_2|} & \mathds{1}_{|\widetilde{p_1}| x |p_3|} & \dots & \mathds{1}_{|\widetilde{p_1}| x |p_k|} \\
    \mathds{1}_{|p_2| x |\widetilde{p_1}|} & D_{p_2} & \frac{n-1}{n}\mathds{1}_{|p_2| x |p_3|} & \dots & \frac{n-1}{n}\mathds{1}_{|p_2| x |p_k|}\\
    \vdots & \frac{n-1}{n}\mathds{1}_{|p_3| x |p_2|} & \ddots & \ddots & \vdots \\
    \vdots & \vdots & \ddots & \ddots & \frac{n-1}{n}\mathds{1}_{|p_{k-1}| x |p_k|} \\
    \mathds{1}_{|p_k| x |\widetilde{p_1}|} & \frac{n-1}{n}\mathds{1}_{|p_k| x |p_2|} & \dots & \frac{n-1}{n}\mathds{1}_{|p_k| x |p_{k-1}|} & D_{p_k}
    \end{bmatrix} $$}


\noindent where $$D_{p_i}  =  \frac{|\widetilde{p_1}|n + \sum_{p_j \in P/\{p_1, p_i\}}{|p_j|(n-1)}}{(n-|p_i|)(n-|p_1|)n} \mathds{1}_{|p_i| x |p_i|} + \frac{1}{n-|p_i|}\mathds{I}_{|p_i| x |p_i|}.$$ 

Given these block matrix forms, we can calculate effective resistance values using the expressions in Theorem~\ref{thm:eff-res-change}.

\begin{lemma}\label{incdient_edge_lemma_for_kpartite}
    Given a measurement $(r,s)$ on a $k$-partite graph, suppose we alter the resistance value on edge $(a,b)$. The new effective resistance will be $R'_{rs} = R_{rs} - \Delta$, where $R_{rs}$ is the effective resistance with no alterations and $\Delta$ is a value obtained from Table~\ref{table:k_part-diff-part} or Table~\ref{table:k_part-same-part}, depending on which partitions contain $r,s,a, \text{and } b$. Table~\ref{table:k_part-same-part} shows the values when the measurement endpoints $r \text{ and } s$ are in the same partition and Table~\ref{table:k_part-diff-part} shows the values when they are not. Each table displays the different possible cases according to the partitions that contain $a \text{ and } b$, and the two cases of having the altered resistance values on the edge $(a,b)$ go to 0 or to $+\infty$ (indicated in the first column of the table). 
    
    We use the following notation to help aid in displaying the expressions: for every $q \in \{1, \ldots k\}$, define 
    $$C_{p_q} = \frac{|\widetilde{p_b}|n + \displaystyle\sum_{i \notin \{b,q\} }{|p_i|(n-1)}}{(n-|p_q|)(n-|p_b|)n} = 
    \frac{(n-1)^2+|\widetilde{p_b}| - |p_q|(n-1)}{(n-|p_q|)(n-|p_b|)n}.$$
\end{lemma}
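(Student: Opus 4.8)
The plan is to mirror the proof of Lemma~\ref{lem:incident_edge_lemma_complete} for the complete-graph case: substitute the explicit entries of the block inverse $L(b)^{-1}$ recorded in this appendix directly into the update formula~\eqref{eq:new-res} of Theorem~\ref{thm:eff-res-change}, and simplify in each configuration to obtain the value $\Delta$ claimed in the tables. Since $(a,b)\in E$ and $G$ is complete $k$-partite, the endpoints $a$ and $b$ always lie in distinct partitions, a fact used throughout.

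First I would read off from the displayed block inverse the handful of entry types of $L(b)^{-1}$ that can occur (taking $b$ as the deleted vertex, so $p_b$ plays the role of $p_1$): the within-$p_b$ diagonal $\frac{2}{n-|p_b|}$ and within-$p_b$ off-diagonal $\frac{1}{n-|p_b|}$; for a partition $p_a\neq p_b$, the diagonal $C_{p_a}+\frac{1}{n-|p_a|}$ and the within-partition off-diagonal $C_{p_a}$; the cross entry $\frac{1}{n-|p_b|}$ between $p_a$ and $p_b$; and the cross entry $\frac{n-1}{n(n-|p_b|)}$ between two partitions both different from $p_b$. Along the way I would verify the second (compact) expression for $C_{p_q}$ by clearing denominators and using $\sum_i |p_i| = n$ together with $|\widetilde{p_b}| = |p_b|-1$; this is a routine identity that reduces both numerators to $n^2-2n+|p_b|-(n-1)|p_q|$.

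Next I would compute the scalar $\beta = \alpha/(1+\alpha L(b)^{-1}_{aa})$ in the two alteration regimes. For shorting ($r_{ab}\to 0$, $\alpha\to+\infty$) this gives $\beta = 1/L(b)^{-1}_{aa}$, and for removal ($r_{ab}\to+\infty$, new weight $0$, hence $\alpha=-1$) it gives $\beta = -1/(1-L(b)^{-1}_{aa})$; in each case $L(b)^{-1}_{aa}$ equals $\frac{2}{n-|p_b|}$ if $a\in p_b$ and $C_{p_a}+\frac{1}{n-|p_a|}$ otherwise. With $\beta$ in hand, the final step is a case-by-case substitution: for each column of Table~\ref{table:k_part-diff-part} (the case $r,s$ in different partitions) and Table~\ref{table:k_part-same-part} (the case $r,s$ in the same partition), I would select the appropriate branch of~\eqref{eq:new-res} --- the $s=b$ branch, the $r=b$ branch, or the $r,s\neq b$ branch --- insert the entries $L(b)^{-1}_{ar}$ and $L(b)^{-1}_{as}$ dictated by the partition memberships and by any coincidences of $r,s$ with $a,b$, and simplify. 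The difference $L(b)^{-1}_{ar}-L(b)^{-1}_{as}$ in the third branch is where several cases collapse (for instance, both entries equal, giving $\Delta=0$), exactly as in the complete-graph lemma.

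The main obstacle is not any single computation --- each is an elementary algebraic simplification --- but the organization and completeness of the case analysis: one must enumerate all placements of $r,s,a,b$ among the partitions (same or different from one another and from $p_b$) together with all coincidences ($r$ or $s$ equal to $a$ or $b$), match each to the correct column of the correct table, and confirm the simplified $\Delta$ agrees. I expect the bookkeeping of which of the six entry types applies to $L(b)^{-1}_{ar}$ and to $L(b)^{-1}_{as}$ in each configuration to be the most error-prone part, so I would tabulate these entries before simplifying in order to keep the casework disciplined.
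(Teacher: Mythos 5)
Your proposal matches the paper's proof essentially exactly: the paper likewise computes $\beta$ in the two regimes ($\beta = 1/L(b)^{-1}_{aa}$ for shorting, $\beta = -1/(1-L(b)^{-1}_{aa})$ for removal), reads the entries of $L(b)^{-1}$ off the displayed block inverse with $b$ as the deleted vertex, and substitutes into the appropriate branch of~\eqref{eq:new-res} case by case --- the paper merely works out two representative columns (\textbf{I} and \textbf{X}) and declares the remaining columns analogous, whereas you propose tabulating all entry types and checking every column. Your entry bookkeeping and the verification of the compact form of $C_{p_q}$ are correct (your case $a\in p_b$ is vacuous since $(a,b)\in E$ forces distinct partitions, as you yourself note), so this is the same argument, carried out slightly more systematically.
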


\begin{table}[htbp]
\centering
\begin{tabular}{|p{1.7cm}|p{5.3cm}|p{6.5cm}|}
    \hline
    \multicolumn{3}{|c|}{$\Delta \text{ with } r \in p_r, s \in p_s$ and $p_r \neq p_s$} \\
    \hline
     & \textbf{I. } \hspace{0.2cm} $a \in p_r, a=r$ \hspace{0.2cm} $b \in p_s, b=s$ & \textbf{II. } \hspace{0.2cm} $a \in p_r, a=r$ \hspace{0.2cm} $b \in p_s, b \neq s$ \\
     \hline
    $r_{ab} \rightarrow 0 $ & $ C_{p_r} + \frac{1}{n-|p_r|}$& $\left( \frac{1}{C_{p_r} + \frac{1}{n - |p_r|}} \right)\left( C_{p_r} + \frac{1}{n-|p_r|} - \frac{1}{n-|p_s|} \right)^2$\\
    \hline
    $r_{ab} \rightarrow +\infty$ & $\frac{-1}{1-\left(C_{p_r} + \frac{1}{n - |p_r|}\right)} \left( C_{p_r} + \frac{1}{n-|p_r|} \right)^2$& $\left( \frac{-1}{1 - \left(C_{p_r} + \frac{1}{n - |p_r|}\right)} \right)\left( C_{p_r} + \frac{1}{n-|p_r|} - \frac{1}{n-|p_s|} \right)^2$\\
    \hline
\end{tabular}

\vspace{-0.05cm}

\begin{tabular}{|p{1.7cm}|p{5.3cm}|p{6.5cm}|}
    \hline
     & \textbf{III. } $a \in p_r, a \neq r$ \hspace{0.2cm} $b \in p_s, b=s$ & \textbf{IV. } \hspace{0.2cm} $a \in p_r, a \neq r$ \hspace{0.2cm} $b \in p_s, b \neq s$ \\
     \hline
    $r_{ab} \rightarrow 0$ & $\frac{C_{p_r}^2}{C_{p_r} + \frac{1}{n-|p_r|}}$ & $\left( \frac{1}{C_{p_r} + \frac{1}{n-|p_r|}} \right)\left( C_{p_r} - \frac{1}{n - |p_s|} \right)^2 $ \\
    \hline
    $r_{ab} \rightarrow +\infty$ & $\frac{-C_{p_r}^2}{1-\left(C_{p_r} + \frac{1}{n-|p_r|}\right)}$ & $\left( \frac{-1}{1 - \left(C_{p_r} + \frac{1}{n-|p_r|}\right)} \right)\left( C_{p_r} - \frac{1}{n - |p_s|} \right)^2 $\\
   \hline
\end{tabular}

\vspace{-0.05cm}

\begin{tabular}{|p{1.7cm}|p{12.25cm}|}
    \hline
     & \textbf{V. } \hspace{0.2cm} $a \in p_r, a=r$ \hspace{0.2cm} $ b \not\in p_s$\\
     \hline
    $r_{ab} \rightarrow 0$ & $\left( \frac{1}{C_{p_r} + \frac{1}{n - |p_r|}} \right)\left( C_{p_r} + \frac{1}{n-|p_r|} - \frac{n-1}{(n-|p_z|)n} \right)^2$\\
    \hline
    $r_{ab} \rightarrow +\infty$ & $\left( \frac{-1}{1 - \left(C_{p_r} + \frac{1}{n - |p_r|}\right)} \right)\left( C_{p_r} + \frac{1}{n-|p_r|} - \frac{n-1}{(n-|p_z|)n} \right)^2$\\
    \hline
\end{tabular}

\vspace{-0.05cm}

\begin{tabular}{|p{1.7cm}|p{6.5cm}|p{5.3cm}|}
    \hline
     & \textbf{VI. }\hspace{0.2cm} $a \in p_r, a \neq r$ \hspace{0.2cm} $b \not\in p_s$ & \textbf{VII. } \hspace{0.2cm} $a \not\in p_r$\hspace{0.2cm} $b \in p_s, b = s$\\
     \hline
    $r_{ab} \rightarrow 0$ & $\left( \frac{1}{C_{p_r} + \frac{1}{n-|p_r|}} \right)\left( C_{p_r} - \frac{n-1}{(n - |p_z|)n} \right)^2 $ &  $\left( \frac{1}{C_{p_z} + \frac{1}{n- |p_z|}} \right)  \left( \frac{n-1}{n(n-|p_s|)} \right)^2$\\
    \hline
    $r_{ab} \rightarrow +\infty$ & $\left( \frac{-1}{1 - \left(C_{p_r} + \frac{1}{n-|p_r|}\right)} \right)\left( C_{p_r} - \frac{n-1}{(n - |p_z|)n} \right)^2$ & $\left( \frac{-1}{1 - \left(C_{p_z} + \frac{1}{n-|p_z|} \right)} \right)  \left( \frac{n-1}{n(n-|p_s|)} \right)^2$\\
    \hline
\end{tabular}

\vspace{-0.05cm}

\begin{tabular}{|p{1.7cm}|p{6.5cm}|p{5.3cm}|}
     & \textbf{VIII. } \hspace{0.2cm} $a \not\in p_r$\hspace{0.2cm} $b \in p_s, b \neq s$ & \textbf{IX. } \hspace{0.2cm} $a,b \not\in p_r \cup p_s$\\
     \hline
    $r_{ab} \rightarrow 0$ & $\left( \frac{1}{C_{p_z} + \frac{1}{n - |p_z|}} \right) \left( \frac{-1}{n(n-|p_s|)} \right)^2$ & $0$\\
    \hline
    $r_{ab} \rightarrow +\infty$ & $\left( \frac{-1}{1-\left(C_{p_z} + \frac{1}{n - |p_z|}\right)} \right) \left( \frac{-1}{n(n-|p_s|)} \right)^2$ & $0$\\
    \hline
\end{tabular}
\caption{Cases for effective resistance changes when measurement end points are in different partitions.}\label{table:k_part-diff-part}
\end{table}

\begin{table}[htbp]
\centering
    \begin{tabular}{|p{1.8cm}|p{5.3cm}|p{3cm}|p{2cm}|}
    \hline
    \multicolumn{4}{|c|}{$\Delta$ with $r,s \in p_r$} \\
    \hline
     \centering & \textbf{X.} \newline $a \in p_r, a = r$ \hspace{0.1cm} or \hspace{0.1cm} $a \in p_r, a = s$ \newline $b \not\in p_r$ & \textbf{XI.} \newline $a \in p_r, a \neq r,s$ \newline $b \not\in p_r$ & \textbf{XII.} \newline $a ,b \not\in p_r$ \\
     \hline
     \centering $r_{ab} \rightarrow 0$ &  $\frac{1}{C_{p_r}(n-|p_r|)^2 + (n-|p_r|)}$ & $0$ & $0$\\
     \hline
     \centering $r_{ab} \rightarrow +\infty$ & $\frac{1}{(C_{p_r}-1)(n-|p_r|)^2 + (n-|p_r|)}$ & $0$ & $0$ \\
     \hline
    \end{tabular}
\caption{Cases for effective resistance changes when measurement end points are in same partition. }\label{table:k_part-same-part}
\end{table}


\begin{proof}
    First, we want to show that the values provided in Table~\ref{table:k_part-diff-part} and Table~\ref{table:k_part-same-part} are valid. We note that, regardless of what $b$ is selected, $R_{rs}$ will remain constant for given $r,s$. We show how two cases are obtained, one from each table. The other values follow in a similar way by substituting values from the block matrices into the expressions provided in Theorem~\ref{thm:eff-res-change}.

    Let $ r\in p_r$, $s \in p_s$, and the altered edge's has endpoints $a=r$ and $b = s$; thus, we are in column \textbf{I} of Table~\ref{table:k_part-diff-part}. To get the value when $r_{ab}$ goes to 0, use use~\eqref{eq:new-res} with $\beta = \frac{1}{L(b)_{rr}^{-1}}$. The value we obtain when plugging in values from the block matrix form will be $\Delta = \beta\left( L(b)_{rr}^{-1}\right)^2 =  L(b)_{rr}^{-1} = C_{p_r} + \frac{1}{n-|p_r|}$ (As a sanity check, note that in this case, the unaltered effective resistance $R_{rs}$ is also equal to $L(b)_{rr}^{-1}$ by~\eqref{eq:original-res}, and so the new effective resistance $R'_{rs} = R_{rs} - \Delta =0$ which is to be expected since we have set the resistance to $0$ on the edge joining the measurement endpoints $r,s$). To get the value when $r_{ab}$ goes to $+\infty$, we have that $\beta = \frac{-1}{1 - L(b)_{rr}^{-1}} = \frac{-1}{1 - \left(C_{p_r} + \frac{1}{n - |p_r|} \right)}$. Upon substitution, we obtain $\Delta = \beta\left( L(b)_{rr}^{-1}\right)^2 = \frac{-1}{1-\left(C_{p_r} + \frac{1}{n - |p_r|}\right)} \left( C_{p_r} + \frac{1}{n-|p_r|} \right)^2$.

    Let $r,s \in p_r$,  and the altered edge has endpoints $a=r$ and $b \in p_z$ such that $p_z \neq p_s$; thus, we are in column \textbf{X} in Table~\ref{table:k_part-same-part}. To get the value when $r_{ab}$ goes to 0, we note that $\beta = \frac{1}{L(b)_{rr}^{-1}} = \frac{1}{C_{p_r} + \frac{1}{n - |p_r|}}$. Then plugging in values from our block matrix form into~\ref{eq:new-res}, we obtain $\Delta = \beta\left( L(b)_{rr}^{-1} - L(b)_{rs}^{-1} \right)^2 = \frac{1}{C_{p_r} + \frac{1}{n - |p_r|}} \left( C_{p_r} + \frac{1}{n-|p_r|} - C_{p_r} \right)^2 = \frac{1}{C_{p_r}(n-|p_r|)^2 + (n-|p_r|)}$. To get the value when $r_{ab}$ goes to $+\infty$, we have that $\beta = \frac{-1}{1 - L(b)_{rr}^{-1}} = \frac{-1}{1 - \left(C_{p_r} + \frac{1}{n - |p_r|} \right)}$. Therefore, the following value is obtained:
    $$\begin{array}{rcl}
        \beta\left( L(b)_{rr}^{-1} - L(b)_{rs}^{-1} \right)^2 & = &  \frac{-1}{1 - \left(C_{p_r} + \frac{1}{n - |p_r|} \right)}\left( C_{p_r} + \frac{1}{n-|p_r|} - C_{p_r} \right)^2 \\
        & = & \frac{1}{(C_{p_r} - 1)(n-|p_r|)^2 + (n-|p_r|)}
    \end{array}$$
    \end{proof}

\end{document}